\theoremstyle{plain}
\newtheorem{theorem}{Theorem}[section]
\newtheorem{proposition}[theorem]{Proposition}
\newtheorem{corollary}[theorem]{Corollary}
\newtheorem{def-thm}[theorem]{Definition-Theorem}
\newtheorem{lemma}[theorem]{Lemma}
\newtheorem{definition}[theorem]{Definition}
\newtheorem*{tha}{Theorem A}
\newtheorem*{thb}{Theorem B}
\newtheorem*{thm}{Main Theorem}
\theoremstyle{definition}
\newtheorem{remark}[theorem]{Remark}
\newcommand{\PP}{\mathbb{P}}
\newcommand{\QQ}{\mathbb{Q}}
\newcommand{\exc}{{\operatorname{exc}}}
\DeclareMathOperator{\ord}{ord}
\DeclareMathOperator{\Div}{Div}
\DeclareMathOperator{\CDiv}{CDiv}
\DeclareMathOperator{\lcm}{lcm}
\DeclareMathOperator{\Supp}{Supp}
\DeclareMathOperator{\Nev}{Nev}
\DeclareMathOperator{\Nevbir}{Nev_{\operatorname{bir}}}
\DeclareMathOperator{\NevEF}{Nev_{\operatorname{EF}}}
\DeclareMathSymbol{\idot}{\mathbin}{operators}{`\.}
\begin{document}
\title{An Evertse--Ferretti Nevanlinna constant and its consequences}

\author{Min Ru}

\address{Department of Mathematics\\University of Houston\\4800 Calhoun Road\\
  Houston, TX 77204\\USA}

\email{minru@math.uh.edu}

\thanks{The first author is supported in part by the Simons Foundation
  grant award \#531604.}

\author{Paul Vojta}

\address{Department of Mathematics\\University of California\\
  970 Evans Hall \#3840\\Berkeley, CA  94720-3840\\USA}

\email{vojta@math.berkeley.edu}

\thanks{Parts of this work were performed while the second author enjoyed
  the kind hospitality of the Fields Institute, partially supported by
  NSF grant DMS-0753152.}

\subjclass[2010]{32H30, 11J97}
\keywords{Nevanlinna constant; Schmidt's Subspace Theorem; integral points;
  diophantine approximation; Nevanlinna theory; filtration method}

\begin{abstract}
In this paper, we introduce the notion of an Evertse--Ferretti
Nevanlinna constant and compare it with the birational Nevanlinna constant
introduced by the authors in a recent joint paper.
We then use it to recover several previously known results.
This includes a 1999 example of Faltings from his \emph{Baker's Garden}
article.
\end{abstract}

\maketitle

\section{Introduction}\label{intro}

Let $X$ be a projective variety and let $D$ be an effective Cartier divisor
on $X$.  Let $\mathscr L$ be a line sheaf on $X$
with $\dim H^0(X, \mathscr L^N) > 1$ for some $N>0$.
Other notations are as described in Section \ref{nev_ef} and in \cite{RV}.

In \cite{RV}, the authors introduced the birational Nevanlinna constant
$\Nevbir(\mathscr L, D)$, which extends the notion of the
Nevanlinna constant $\Nev(\mathscr L, D)$ introduced by
the first-named author \cite{ru}, and proved the following results.

\begin{tha}\label{b_thmc}
Let $k$ be a number field, and let $S$ be a finite set of places of $k$
containing all archimedean places.
Let $X$ be a projective variety over $k$, let $D$ be an effective Cartier
divisor on $X$, and let $\mathscr L$ be a line sheaf on $X$
with $\dim H^0(X,\mathscr L^N) > 1$ for some $N>0$.
Then, for every $\epsilon>0$,
there is a proper Zariski-closed subset $Z$ of $X$ such that the inequality
\begin{equation}
  m_S(x, D) \le \left(\Nevbir(\mathscr L, D)+\epsilon\right) h_{\mathscr L}(x)
\end{equation}
holds for all $x\in X(k)$ outside of $Z$.
\end{tha}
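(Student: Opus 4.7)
The overall strategy is to exploit the definition of $\Nevbir(\mathscr L, D)$ as an infimum over birational models of a quantity tailor-made to feed the Corvaja--Zannier filtration method into Schmidt's Subspace Theorem. Concretely, given $\epsilon>0$, I first unpack the definition of $\Nevbir$ to fix a projective birational morphism $\pi\colon \widetilde X \to X$, an integer $N\ge 1$, and a sub-linear-system $V\subseteq H^0(\widetilde X,(\pi^*\mathscr L)^N)$ (plus any auxiliary divisor data required by the definition) for which the associated constant is within $\epsilon/2$ of $\Nevbir(\mathscr L, D)$. After a further blow-up of $\widetilde X$, one may assume that $\pi^* D$ together with the base locus of $V$ is contained in a simple normal crossings divisor.

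Next, carry out the Corvaja--Zannier local construction on $\widetilde X$. For each $v\in S$ and each closed point $P$ in the support of $\pi^* D$, build the filtration of $V$ by order of vanishing along the components of $\pi^* D$ through $P$, and pick a basis of $V$ adapted to this filtration. Collecting these bases over all $P$ gives, for each $v$, sections $s_{1,v},\dots,s_{h,v}$ of $V$ (with $h=\dim V$) such that in a $v$-adic neighborhood of $\pi^* D$,
\[
   \mu \cdot \lambda_{\pi^* D,v}(\tilde x)
   \;\le\; \sum_{i=1}^{h} \lambda_{s_{i,v},v}(\tilde x) + O(1),
\]
where $\mu$ is precisely the minimum-total-vanishing quantity entering the definition of $\Nevbir$. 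Summing over $v\in S$ (and using boundedness of Weil functions away from $\pi^* D$) gives
\[
   \mu \cdot m_S(\tilde x, \pi^* D)
   \;\le\; \sum_{v\in S}\sum_{i=1}^{h} \lambda_{s_{i,v},v}(\tilde x) + O(1).
\]

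The last ingredient is Schmidt's Subspace Theorem (in its Evertse--Ferretti form if $V$ gives a non-linearly-normal embedding). The basis $\{s_{i,v}\}$ embeds $\widetilde X$ via $V$ into a projective space, and the corresponding hyperplanes are in general position. Schmidt then supplies a proper Zariski-closed $\widetilde Z\subset \widetilde X$ with
\[
   \sum_{v\in S}\sum_{i=1}^{h} \lambda_{s_{i,v},v}(\tilde x)
   \;\le\; (hN+\epsilon')\, h_{\pi^*\mathscr L}(\tilde x)
\]
for all $\tilde x \in \widetilde X(k)\setminus\widetilde Z$. Dividing the combined inequality by $\mu$, choosing $\epsilon'$ small, and absorbing the $O(1)$ term by enlarging $\widetilde Z$ to contain the bounded locus of $h_{\pi^*\mathscr L}$ (nonempty positivity follows from $\dim H^0(\mathscr L^N)>1$), one obtains $m_S(\tilde x,\pi^* D) \le (\Nevbir(\mathscr L,D)+\epsilon)\, h_{\pi^*\mathscr L}(\tilde x)$ outside $\widetilde Z$. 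Finally let $Z_0\subset X$ be a proper closed subset outside of which $\pi$ is an isomorphism; functoriality of heights and Weil functions under birational maps gives $m_S(x,D)=m_S(\tilde x,\pi^* D)+O(1)$ and $h_{\mathscr L}(x)=h_{\pi^*\mathscr L}(\tilde x)+O(1)$ for $x\in X(k)\setminus Z_0$, so $Z:=Z_0\cup\pi(\widetilde Z)$ completes the proof.

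The main obstacle is the middle step: converting the combinatorial definition of $\Nevbir$ into a uniform local Weil-function inequality holding at every $v\in S$ and every local component of $\pi^* D$, with the constant $\mu$ exactly as prescribed by the definition. The filtrations and bases must be arranged so that the very quantity minimized in the definition of $\Nevbir$ appears on the right, and so that the sections $s_{i,v}$ assemble into families of hyperplanes remaining in general position on $\widetilde X$ when Schmidt's theorem is invoked. This compatibility is exactly what the definition of $\Nevbir$ was engineered to make possible, but realizing it requires careful SNC bookkeeping on $\widetilde X$ and a coherent coordination of the $v$-adic bases across all places of $S$.
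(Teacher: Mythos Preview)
Theorem~A is not proved in this paper; it is quoted from \cite{RV}.  Nonetheless the paper reproduces enough of the machinery (Proposition~\ref{rv_prop_nevprime_weil}, which is \cite[Prop.~3.3]{RV}) to make clear how the argument runs, and your outline diverges from it in a way that reflects a genuine misunderstanding of what the definition of $\Nevbir$ already hands you.

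The definition of $\Nevbir$ (Definition~\ref{def_nevbir}) does \emph{not} merely record a numerical infimum to be realised later by a Corvaja--Zannier filtration.  It directly supplies, for each point $Q$ of the model $Y$, a basis $\mathcal B$ of $V$ with $\phi^{*}(\mathcal B)\ge \mu N\phi^{*}D$ on a neighbourhood of $Q$.  By quasi-compactness this yields finitely many bases $\mathcal B_1,\dots,\mathcal B_\ell$ of the \emph{same} space $V$, and Proposition~\ref{rv_prop_nevprime_weil} converts this immediately into the Weil-function inequality
\[
  \max_{1\le i\le\ell}\lambda_{\mathcal B_i}\ \ge\ \mu N\,\lambda_D - c\;.
\]
There is no filtration to build, no SNC resolution to take, and no ``minimum-total-vanishing quantity'' to extract: the bases are part of the data witnessing the infimum.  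Your middle step is attempting to reconstruct from scratch something the definition already provides; the Corvaja--Zannier filtration belongs to \emph{computing} $\Nevbir$ in particular examples (as in Autissier's work), not to proving Theorem~A.

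Two further points.  First, your claim that the sections $s_{i,v}$ give hyperplanes ``in general position'' on $\widetilde X$ is unwarranted: $V$ need not be base-point-free, and the rational map it defines need not be a morphism, let alone an embedding.  The form of Schmidt used in \cite{RV} is the one for a fixed subspace $V\subseteq H^0(X,\mathscr L^N)$ with finitely many bases, which bounds $\sum_{v\in S}\max_i\lambda_{\mathcal B_i,v}(x)$ by $(\dim V+\epsilon')\,h_{\mathscr L^N}(x)$; no general-position hypothesis on $X$ is needed.  Second, your bookkeeping with $N$ and $\mu$ is off: the growth condition gives $\mu N\lambda_D$ on the left, Schmidt gives $(\dim V+\epsilon')\,N\,h_{\mathscr L}$ on the right, and the $N$'s cancel to leave $(\dim V)/\mu$, which is what approximates $\Nevbir$.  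Your display $(hN+\epsilon')h_{\pi^{*}\mathscr L}$ followed by ``dividing by $\mu$'' does not produce the correct constant.
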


\begin{thb}\label{b_thmd}
Let $X$ be a complex projective variety, let $D$ be an effective Cartier
divisor, and let $\mathscr L$ be a line sheaf on $X$
with $\dim H^0(X, \mathscr L^N) > 1$ for some $N>0$.
Let $f\colon\mathbb C\to X$ be a holomorphic mapping with Zariski-dense image.
Then, for every $\epsilon>0$,
\begin{equation}\label{b_thmc_eq1}
  m_f(r, D) \le_\exc
    \left(\Nevbir(\mathscr L, D)+\epsilon\right) T_{f, \mathscr L}(r)\;.
\end{equation}
\end{thb}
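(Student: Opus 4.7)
The plan is to deduce Theorem B from the standard Nevanlinna-constant estimate applied to a well-chosen birational model of $X$, in direct parallel with the proof of Theorem A, with Cartan's Second Main Theorem playing the role of Schmidt's Subspace Theorem. Fix $\epsilon>0$. By the very definition of $\Nevbir(\mathscr L,D)$ as an infimum over projective birational modifications, choose a birational morphism $\pi\colon\tilde X\to X$ and the accompanying data on $\tilde X$ so that the ordinary Nevanlinna constant satisfies $\Nev(\pi^*\mathscr L,\pi^*D)\le\Nevbir(\mathscr L,D)+\epsilon/2$.

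Because $f$ has Zariski-dense image in $X$ and $\pi$ is birational, $f$ lifts uniquely to a holomorphic map $\tilde f\colon\mathbb C\to\tilde X$, and $\tilde f$ again has Zariski-dense image: if it lay in a proper closed subvariety $W\subsetneq\tilde X$, then the image of $f$ would lie in the closed set $\pi(W)\subsetneq X$, contradicting Zariski-density. Standard functoriality of proximity and characteristic functions under birational morphisms gives $m_f(r,D)=m_{\tilde f}(r,\pi^*D)+O(1)$ and $T_{f,\mathscr L}(r)=T_{\tilde f,\pi^*\mathscr L}(r)+O(1)$. Now apply to $\tilde f$ the Nevanlinna-theoretic estimate for $\Nev$ from \cite{ru}, whose proof constructs an Evertse--Ferretti-style filtration on $H^0(\tilde X,(\pi^*\mathscr L)^N)$ for $N\gg 0$, chooses a basis subordinate to the filtration so as to produce sections with large total order of vanishing along $\pi^*D$, and applies Cartan's Second Main Theorem to the induced map to projective space; this yields
\[
  m_{\tilde f}(r,\pi^*D)\le_\exc\bigl(\Nev(\pi^*\mathscr L,\pi^*D)+\tfrac{\epsilon}{2}\bigr)\,T_{\tilde f,\pi^*\mathscr L}(r).
\]
Chaining this with the two identities above and absorbing the $O(1)$ terms and the remaining $\epsilon/2$ into a single $\epsilon$ yields \eqref{b_thmc_eq1}.

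The main obstacle I anticipate is the careful bookkeeping between the precise definition of $\Nevbir$ used in Section \ref{nev_ef} and the birational model chosen above: depending on whether the infimum is taken with respect to $\pi^*D$, $\pi^{-1}D$, or a strict transform, one may have to pass to a further resolution on which $\pi^*D$ has simple normal crossings in order that the filtration argument yield an estimate for the correct divisor. This is precisely the bookkeeping already performed in the proof of Theorem A; once it is in place, the Nevanlinna version is an almost verbatim transliteration of the Diophantine argument, with Cartan's theorem replacing Schmidt's subspace theorem and the First Main Theorem of Nevanlinna theory replacing the product formula.
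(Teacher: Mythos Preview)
This paper does not itself prove Theorem~B; both Theorems~A and~B are quoted from~\cite{RV} as prior results, so there is no proof here to compare against directly.

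That said, your approach is essentially correct. The identity $\Nevbir(\mathscr L,D)=\inf_{\phi}\Nev(\phi^{*}\mathscr L,\phi^{*}D)$, the infimum taken over normal proper birational models $\phi\colon Y\to X$, follows immediately from Definition~\ref{def_nevbir} once one notes that pullback identifies $H^{0}(X,\mathscr L^{N})$ with $H^{0}(Y,\phi^{*}\mathscr L^{N})$ when $X$ is normal. So choosing a model realizing the infimum up to $\epsilon/2$, lifting $f$ (which extends across the discrete preimage of the exceptional locus since $\tilde X$ is projective), and invoking the $\Nev$-version from~\cite{ru} does yield~\eqref{b_thmc_eq1}. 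Your anticipated obstacle is largely a non-issue: Definition~\ref{def_nevbir} explicitly uses the total pullback $\phi^{*}D$, not a strict transform, and no simple-normal-crossings hypothesis enters anywhere. Note also that your appeal to ``the bookkeeping already performed in the proof of Theorem~A'' is misplaced---Theorem~A is likewise only cited here, not proved.

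The route actually taken in~\cite{RV}, as one can infer from Propositions~\ref{rv_prop_nevprime_weil} and~\ref{prop_mu_b_growth_equiv_iv} of the present paper, is slightly different in packaging: rather than lifting $f$ to a model, one uses the Weil-function characterization of $\mu$-b-growth (finitely many bases $\mathcal B_{1},\dots,\mathcal B_{\ell}$ of a single $V$ with $\max_{i}\lambda_{\mathcal B_{i}}\ge\mu\lambda_{D}-c$) and feeds these sections directly into the Cartan-type Second Main Theorem on $X$ itself. The two approaches are equivalent in content; yours makes the birational nature of $\Nevbir$ explicit by passing to a model, while the Weil-function route keeps everything on $X$ and absorbs the birational model into the b-divisor formalism.
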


Recall that $\Nevbir(\mathscr L, D)$ and $\Nev(\mathscr L, D)$
are defined as follows.  (In \cite{RV} we defined them only for
$\mathbb Q$-Cartier divisors and required $\mu$ to be rational, but
we are working over $\mathbb R$ in this paper.
See Section \ref{r_divisors} for more details on $\mathbb R$-Cartier divisors.)

\begin{definition}\label{def_nevbir}
Let $X$ be a complete variety, let $D$ be an effective $\mathbb R$-Cartier
divisor on $X$, and let $\mathscr L$ be a line sheaf on $X$.  Then we define
$$\Nevbir(\mathscr L,D) = \inf_{N,V,\mu} \frac{\dim V}{\mu}\;,$$
where the infimum passes over all triples $(N,V,\mu)$ such that
$N\in\mathbb Z_{>0}$, $V$ is a linear subspace of $H^0(X,\mathscr L^N)$
with $\dim V>1$, and $\mu\in\mathbb R_{>0}$, with the following property.
There exist a variety $Y$ and a proper birational morphism $\phi\colon Y\to X$
such that the following condition holds.
For all $Q\in Y$ there is a basis $\mathcal B$ of $V$ such that
\begin{equation}\label{eq3}
  \phi^{*}(\mathcal B) \ge \mu N\phi^{*}D
\end{equation}
in a Zariski-open neighborhood $U$ of $Q$,
relative to the cone of effective $\mathbb R$-Cartier divisors on $U$.
If there are no such triples $(N,V,\mu)$, then $\Nevbir(\mathscr L,D)$
is defined to be $+\infty$.

If $L$ is a Cartier divisor or Cartier divisor class on $X$, then
we define $\Nevbir(L,D)=\Nevbir(\mathscr O(L),D)$.
We also define $\Nevbir(D)=\Nevbir(D,D)$.
\end{definition}

The Nevanlinna constant $\Nev(\mathscr L, D)$ is defined similarly
but without taking the birational model (i.e., it requires $Y$ to be the
normalization of $X$).
Obviously, $\Nevbir(\mathscr L,D) \leq \Nev(\mathscr L,D)$.





The purpose of this note is to introduce the following variant of the
Nevanlinna constant, based on a theorem of Evertse and Ferretti
\cite{ef_festschrift}.

\begin{definition}\label{def_nevef}
Let $X$ be a complete variety, let $D$ be an effective $\mathbb R$-Cartier
divisor on $X$, and let $\mathscr L$ be a line sheaf on $X$.  Then we define
$$\NevEF(\mathscr L,D) = \inf_{N,\mu} \frac{\dim X + 1}{\mu}\;,$$
where the infimum passes over all pairs $(N,\mu)$ such that
$N\in\mathbb Z_{>0}$ and $\mu\in\mathbb R_{>0}$, with the following property.
There exist a variety $Y$ and a proper birational morphism $\phi\colon Y\to X$
such that the following condition holds.
For all $Q\in Y$ there exist a base-point-free linear subspace
$V\subseteq H^0(X,\mathscr L^N)$ with $\dim V=\dim X+1$ and
a basis $\mathcal B$ of $V$ such that
\begin{equation}
  \phi^{*}(\mathcal B) \ge \mu N\phi^{*}D
\end{equation}
in a Zariski-open neighborhood $U$ of $Q$,
relative to the cone of effective $\mathbb R$-divisors on $U$.
If there are no such pairs $(N,\mu)$, then $\NevEF(\mathscr L,D)$
is defined to be $+\infty$.

If $L$ is a Cartier divisor or Cartier divisor class on $X$, then
we define $\NevEF(L,D)=\NevEF(\mathscr O(L),D)$.
We also define $\NevEF(D)=\NevEF(D,D)$.
\end{definition}

Our main result is as follows.

\begin{thm}
Let $X$ be a geometrically integral variety over a number field $k$,
let $D$ be an effective
$\mathbb R$-Cartier divisor on $X$, and let $\mathscr L$ be a line sheaf on $X$.
Then
\begin{equation}\label{ineq_b_nev_ef_nev}
  \Nevbir(\mathscr L,D) \le \NevEF(\mathscr L,D)\;.
\end{equation}
\end{thm}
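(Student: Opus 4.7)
The plan is to promote any $\NevEF$-witnessing pair $(N, \mu)$ to a $\Nevbir$-witnessing triple $(N, V, \mu)$ with $\dim V / \mu \le (\dim X + 1) / \mu$. Starting from the $\NevEF$ data---namely $\phi\colon Y \to X$ and, for each $Q \in Y$, a base-point-free $(\dim X + 1)$-dimensional subspace $V_Q \subseteq H^0(X,\mathscr L^N)$ with basis $\mathcal B_Q$ satisfying $\phi^{*}(\mathcal B_Q) \ge \mu N \phi^{*}D$ on an open neighborhood $U_Q \ni Q$---I would first invoke the Zariski-openness of this condition together with Noetherian quasi-compactness of $Y$ to extract a finite cover $Y = U_1 \cup \cdots \cup U_k$ equipped with constant data $(V_i, \mathcal B_i)$ on each $U_i$.

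The natural fixed subspace to try for $\Nevbir$ is $V = V_1 + \cdots + V_k \subseteq H^0(X, \mathscr L^N)$. For any $Q \in U_i$, completing $\mathcal B_i$ to a basis $\mathcal B$ of $V$ preserves $\phi^{*}(\mathcal B) \ge \mu N \phi^{*}D$ on $U_i$, because every section of $H^0(X,\mathscr L^N)$ pulls back to an effective divisor on $Y$ and therefore the completion contributes nonnegatively at each prime divisor. Equivalently, using the monotonicity of the vanishing-sequence sum at a prime divisor under subspace inclusion, the divisor bound of $V_i$ is inherited by the enlargement $V$. This already gives the preliminary estimate $\Nevbir(\mathscr L,D) \le \dim V / \mu \le k(\dim X + 1) / \mu$.

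The principal obstacle is to remove the factor $k$ and reach the clean bound $(\dim X + 1)/\mu$. I expect this requires one of two refinements: (i) exploit the fact that $\Nevbir$ imposes neither base-point-freeness nor a dimensional ceiling on $V$, and carve out within $V_1 + \cdots + V_k$ a subspace of dimension at most $\dim X + 1$ whose vanishing-sequence sums at every prime of $\phi^{*}D$ still dominate $\mu N \cdot \mathrm{mult}_E(\phi^{*}D)$; or (ii) pass to a large multiple $\mathscr L^{MN}$ and invoke an asymptotic Riemann--Roch / Newton--Okounkov argument to produce a subspace whose ratio $\dim V/\mu$ converges to $(\dim X+1)/\mu$ as $M \to \infty$. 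Carrying out this dimensional contraction---essentially showing that the locally varying $V_Q$ of $\NevEF$ can be replaced by a single rigid $V$ of essentially the same size while keeping the divisor inequality at every $Q$---is the technical heart of the argument and the step I expect to be the main difficulty.
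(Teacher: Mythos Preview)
Your reduction to finitely many pairs $(V_i,\mathcal B_i)$ via quasi-compactness is correct and is indeed the starting point, and you have correctly located the real difficulty: passing from the locally varying $(\dim X+1)$-dimensional subspaces to a single fixed $V$ without losing the factor $k$. However, neither of your two proposed resolutions is the right one, and the gap is substantive.

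Approach (i) is not viable. There is in general no single subspace $V$ of dimension $\le\dim X+1$ inside $V_1+\dots+V_k$ whose basis divisor dominates $\mu N\phi^{*}D$ at \emph{every} $Q$; the freedom to let $V$ depend on $Q$ is precisely what distinguishes $\NevEF$ from a naive fixed-subspace invariant, and one cannot simply ``carve out'' a small fixed $V$ without a quantitative mechanism. Approach (ii) is in the right direction---one does pass to $\mathscr L^{mN}$ for large $m$---but an unspecified asymptotic Riemann--Roch or Okounkov-body argument is not enough; you need a specific inequality relating the local data $\lambda_{(s_j),v}(x)$ to a global quantity controlled by $H_Y(m)$.

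The missing ingredient is Mumford's degree-of-contact theory, in the form used by Evertse and Ferretti. Concretely: let $s_1,\dots,s_q$ be the union $\bigcup_i\mathcal B_i$, let $\Phi\colon X\to\mathbb P^{q-1}$ be the associated morphism, and let $Y$ be its image with $n=\dim Y$. For $\mathbf c\in\mathbb R_{\ge0}^q$ one has the function $S_Y(m,\mathbf c)$ (the maximal weighted sum over monomial bases of $H^0(Y,\mathscr O(m))$), and the key inequality is that whenever $x_{j_0},\dots,x_{j_n}$ span a base-point-free system on $Y$,
\[
  c_{j_0}+\dots+c_{j_n}\;\le\;(n+1)\,\frac{S_Y(m,\mathbf c)}{mH_Y(m)}\,\bigl(1+O(1/m)\bigr)\;.
\]
Applying this with $\mathbf c=(\lambda_{(s_1),v}(x),\dots,\lambda_{(s_q),v}(x))$ and $\{j_0,\dots,j_n\}$ equal to whichever $\mathcal B_i$ witnesses (\ref{eq_mu_ef_growth_iv}) at $x$, one obtains $S_Y(m,\mathbf c)\ge \nu m\lambda_{D,v}(x)+O(1)$ with $\nu=\mu H_Y(m)/\bigl((n+1)(1+O(1/m))\bigr)$. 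Now take $V=\Phi^{*}H^0(Y,\mathscr O(m))\subseteq H^0(X,\mathscr L^{mN})$, so $\dim V=H_Y(m)$; then $mND$ has $\nu$-b-growth with respect to $V$, and
\[
  \frac{\dim V}{\nu}=\frac{(n+1)\bigl(1+O(1/m)\bigr)}{\mu}\le\frac{\dim X+1}{\mu}+\epsilon
\]
for $m$ large. This is what eliminates your factor $k$: the subspace $V$ is allowed to be large (dimension $H_Y(m)$), but $\nu$ scales with it, and the degree-of-contact inequality guarantees the ratio is $(n+1)/\mu$ asymptotically. Without this inequality your plan has no engine.
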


Combining this with Theorem A and Lemma \ref{lemma_non_geom_irred}, we then have

\begin{theorem}\label{ef_thmc}
Let $k$ be a number field, and let $S$ be a finite set of places of $k$
containing all archimedean places.
Let $X$ be a projective variety over $k$, and let $D$ be an effective
$\mathbb R$-Cartier divisor on $X$.  Then, for every $\epsilon>0$,
there is a proper Zariski-closed subset $Z$ of $X$ such that the inequality
\begin{equation}
  m_S(x, D) \le \left(\NevEF(D)+\epsilon\right) h_D(x)
\end{equation}
holds for all $x\in X(k)$ outside of $Z$.
\end{theorem}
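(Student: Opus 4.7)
The plan is to deduce Theorem \ref{ef_thmc} as a formal consequence of the Main Theorem, Theorem A, and Lemma \ref{lemma_non_geom_irred}. One may assume $\NevEF(D) < \infty$, for otherwise the inequality is vacuous. Unwinding the definition, this furnishes some $N \ge 1$ and a base-point-free subspace $V \subseteq H^0(X, \mathscr O(D)^N)$ of dimension $\dim X + 1 \ge 2$, so in particular $\dim H^0(X, \mathscr O(D)^N) > 1$. This is precisely the nontriviality hypothesis required to invoke Theorem A with $\mathscr L = \mathscr O(D)$.

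Assuming first that $X$ is geometrically integral, Theorem A yields a proper Zariski-closed $Z \subset X$ such that
\begin{equation}
  m_S(x, D) \le \bigl(\Nevbir(\mathscr O(D), D) + \epsilon\bigr) h_{\mathscr O(D)}(x)
\end{equation}
for all $x \in X(k) \setminus Z$. The Main Theorem then gives $\Nevbir(\mathscr O(D), D) \le \NevEF(\mathscr O(D), D) = \NevEF(D)$, while $h_{\mathscr O(D)}(x) = h_D(x)$, and the desired inequality follows. For a general projective $X$, Lemma \ref{lemma_non_geom_irred} is what reduces the problem to this geometrically integral case, presumably by passing to components of $X_{\bar k}$ in a Galois-equivariant manner, applying the previous step on each, and taking a union of exceptional sets.

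The substantive content is entirely in the Main Theorem (and in Theorem A, which is already established); given these, the proof of Theorem \ref{ef_thmc} is a formal repackaging. The points requiring care are (a) that Theorem A, as stated for integer Cartier divisors, transfers smoothly to the $\mathbb{R}$-Cartier setting used here, which the paper evidently handles throughout as a matter of course, and (b) that Lemma \ref{lemma_non_geom_irred} covers the geometric-integrality reduction in the exact form needed for $\NevEF$. Neither looks like a genuine obstacle, so the main difficulty lies in the Main Theorem itself rather than in this corollary.
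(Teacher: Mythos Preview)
Your overall approach is correct and matches the paper's: Theorem~\ref{ef_thmc} is obtained precisely by combining the Main Theorem with Theorem~A, using Lemma~\ref{lemma_non_geom_irred} to handle the case where $X$ is not geometrically integral.

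One correction, though: your speculation about the content of Lemma~\ref{lemma_non_geom_irred} is off. The lemma does \emph{not} pass to components of $X_{\bar k}$ Galois-equivariantly. It states instead that an integral variety over a field of characteristic~$0$ with Zariski-dense rational points is automatically geometrically integral. Hence the reduction is trivial: if $X$ fails to be geometrically integral, then $X(k)$ is contained in a proper closed subset $Z$, and the inequality holds vacuously outside~$Z$. This matters because the mechanism you proposed would require comparing $\NevEF(D)$ on $X$ with the corresponding constants on its geometric components, and no such comparison is available or needed.
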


\begin{corollary}\label{cor_nev_ef_int_pts}
Let $X$ be a projective variety, and let $D$ be an ample Cartier divisor
on $X$.  If $\NevEF(D)<1$ then there is a proper Zariski-closed subset $Z$
of $X$ such that any set of $D$-integral points on $X$ has only finitely many
points outside of $Z$.
\end{corollary}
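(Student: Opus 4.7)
The plan is to deduce the corollary directly from Theorem \ref{ef_thmc} by combining its conclusion with the defining property of integral points and Northcott's finiteness theorem.

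First I would fix a number field $k$ over which $X$, $D$, and the chosen set $R$ of $D$-integral points are defined, together with a finite set $S$ of places of $k$ containing the archimedean ones and such that $R$ consists of $(D,S)$-integral points. By the definition of integral point (in terms of local heights, or equivalently proximity functions), the counting function $N_S(x,D) := h_D(x) - m_S(x,D)$ is bounded on $R$; equivalently, $m_S(x,D) = h_D(x) + O(1)$ for all $x \in R$, where the implied constant depends on $R$ but not on $x$.

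Next I would choose $\epsilon > 0$ small enough that $\NevEF(D) + \epsilon < 1$, which is possible since $\NevEF(D) < 1$ by hypothesis. Applying Theorem \ref{ef_thmc} with this $\epsilon$ produces a proper Zariski-closed subset $Z \subseteq X$ such that
\begin{equation*}
  m_S(x,D) \le \bigl(\NevEF(D) + \epsilon\bigr) h_D(x)
\end{equation*}
for all $x \in X(k)$ outside $Z$. Combining this with the integrality estimate $m_S(x,D) = h_D(x) + O(1)$ gives
\begin{equation*}
  \bigl(1 - \NevEF(D) - \epsilon\bigr) h_D(x) \le O(1)
\end{equation*}
for every $x \in R \setminus Z$, so that $h_D$ is bounded on $R \setminus Z$.

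Finally, because $D$ is ample, $h_D$ is (up to $O(1)$) a Weil height associated to an ample divisor, and so Northcott's theorem applied over the fixed number field $k$ implies that there are only finitely many points of $X(k)$ with bounded $h_D$. Hence $R \setminus Z$ is finite, as required. The only mild subtlety is keeping track of the fact that the constant in $m_S(x,D) = h_D(x) + O(1)$ is uniform on $R$, which is precisely the content of the definition of a set of $D$-integral points; everything else is a routine combination of Theorem \ref{ef_thmc} and Northcott.
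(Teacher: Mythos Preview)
Your argument is correct and is exactly the standard deduction the paper has in mind; the paper states the corollary immediately after Theorem~\ref{ef_thmc} without a separate proof, and your combination of Theorem~\ref{ef_thmc}, the defining property of $(D,S)$-integral points, and Northcott's theorem is the intended route. One small presentational point: the corollary wants a single $Z$ that works for \emph{every} set of $D$-integral points, so it is cleaner to fix $k$, $S$, and $\epsilon$ first, obtain $Z$ from Theorem~\ref{ef_thmc}, and only then let $R$ be an arbitrary set of $(D,S)$-integral points; your argument does this in substance, but the phrasing introduces $R$ before $S$ and $Z$, which slightly obscures the quantifier order.
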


A similar theorem and corollary also hold in the case of holomorphic curves.

Theorem \ref{ef_thmc} generalizes the original theorem of Evertse and Ferretti,
and we show how to recover the latter theorem from Theorem \ref{ef_thmc}
(see Theorem \ref{thm_ef}).

Some other applications are also given.  This includes discussion of
a 1999 class of examples obtained by Faltings \cite{faltings}.

These examples consist of irreducible divisors $D$ on $\mathbb P^2$
for which $\mathbb P^2\setminus D$ has only finitely many integral points
over any number ring, and over any localization of such a ring
away from finitely many places.

Faltings' paper is notable for two reasons.
First, the divisor $D$ is irreducible.
Prior to the paper, the only divisors $D$ on $\mathbb P^2$ for which such
statements were known were divisors with at least four irreducible components.
The second reason is that (as noted at the very end of \cite{faltings})
the paper gives examples of varieties for which
finiteness of integral points is known, yet which cannot be embedded into
semiabelian varieties.  Prior to the paper, the only varieties for which such
finiteness statements were known, and which could not be embedded into
semiabelian varieties, were moduli spaces of abelian varieties.

Faltings' construction was further explored
by Zannier \cite{zannier} using methods of Zannier and Corvaja.
Zannier considered a different family of examples,
which has substantial overlap with the examples of Faltings but does not
contain all of his examples.  After that, Levin \cite[\S\,13]{levin_annals}
derived a generalization, using his method of \emph{large divisors,}
that encompasses the examples of both Faltings and Zannier.

The method of Faltings' paper is to apply the original ``filtration method''
of Faltings and W\"ustholz \cite{fw}, together with their
probabilistic version of Schmidt's Subspace Theorem, to certain
coherent ideal sheaves on a certain \'etale cover of $\mathbb P^2\setminus D$.

This use of ideal sheaves naturally suggested the use of
``generalized Weil functions'' of the second-named author
\cite[\S\,7]{vojta_ssav1},
which, as noted in \cite{RV}, are now more aptly called b-Weil functions.
We first rephrased Faltings' proof in terms of b-Weil functions and replaced
the work of Faltings--W\"ustholz with the method of
Evertse and Ferretti \cite{ef_festschrift}.

In an effort to express this modified proof using the Nevanlinna constant
of the first-named author, we formulated the ``birational Nevanlinna constant''
$\Nevbir(\mathscr L,D)$ (Definition \ref{def_nevbir}).
We found in \cite{RV} that this definition worked well for adapting
Autissier's proof, but that it was not possible to apply it to
Faltings' examples without involving the Evertse--Ferretti method.
This was what led to the formulation of the ``Evertse--Ferretti Nevanlinna
constant'' $\NevEF(\mathscr L,D)$ (Definition \ref{def_nevef}).

The outline of this paper is as follows.  Section \ref{r_divisors} introduces
$\mathbb R$-Cartier b-divisors on a complete variety $X$ and develops some
of their basic properties.
Section \ref{nev_ef} gives more information on the Evertse--Ferretti
Nevanlinna constant, including some equivalent formulations of it,
and proves the Main Theorem.
Section \ref{apps} contains some brief applications of Theorem \ref{ef_thmc},
including the earlier result of Evertse and Ferretti.
Section \ref{falt_sect} concludes this note by discussing a 1999 class
of examples obtained by Faltings \cite{faltings}.

\section{$\mathbb R$-Cartier b-divisors and their b-Weil functions}
\label{r_divisors}

This section introduces some notation for divisors and b-divisors
with coefficients in $\mathbb R$.  (Divisors with real coefficients
were used in \cite{RV}, but were not fully described there
because they were added as a late change to the paper.)

Let $k$ be a field of characteristic zero.  As in \cite{RV}, a \textbf{variety}
over $k$ is an integral scheme, separated and of finite type over $k$.
It is said to be \textbf{complete} if it is proper over $k$.

\subsection{$\mathbb Q$-Cartier and $\mathbb R$-Cartier divisors}

If $X$ is a variety over $k$, then we let $\CDiv(X)$ denote the group of
Cartier divisors on $X$.  As in \cite[1.3.B]{PAGI}, an
\textbf{$\mathbb R$-Cartier divisor} on $X$ is an element of the group
\begin{equation}\label{cdiv_R}
  \CDiv_{\mathbb R}(X) := \CDiv(X)\otimes_{\mathbb Z}\mathbb R\;.
\end{equation}
Thus, every $\mathbb R$-Cartier divisor $D$ on $X$ occurs as a finite sum
$D=\sum c_iD_i$, with $c_i\in\mathbb R$ and $D_i\in\CDiv(X)$ for all $i$.
An $\mathbb R$-Cartier divisor $D$ is \textbf{effective} if it can be
written in the above form with $c_i\ge0$ and $D_i$ effective for all $i$.

If $X$ is normal, then $\CDiv(X)$ is canonically identified with a subgroup
of the group $\Div(X)$ of Weil divisors on $X$, and by
\cite[II~6.3A]{Hartshorne} a Cartier divisor on $X$ is effective if and only if
it is effective as a Weil divisor.  Defining $\CDiv_{\mathbb Q}(X)$,
$\Div_{\mathbb R}(X)$, and $\Div_{\mathbb Q}(X)$ similarly to (\ref{cdiv_R}),
we have that the map $\Div(X)\to\CDiv(X)$ induces natural maps
$\Div_{\mathbb Q}(X)\to\CDiv_{\mathbb Q}(X)$ and
$\Div_{\mathbb R}(X)\to\CDiv_{\mathbb R}(X)$.  These too are injective
($\mathbb Q$ is a localization of $\mathbb Z$, hence is flat over $\mathbb Z$,
and $\mathbb R$ is flat over the field $\mathbb Q$, respectively).
Moreover these maps again preserve the respective cones of effective divisors.

Since the maps $\Div(X)\to\Div_{\mathbb Q}(X)\to\Div_{\mathbb R}(X)$
are injective and respect the notion of effectivity, the maps
$\CDiv(X)\to\CDiv_{\mathbb Q}(X)\to\CDiv_{\mathbb R}(X)$ have these same
properties when $X$ is normal.

If $X$ is not normal, then an effective (integral) Cartier divisor on $X$
remains effective in $\CDiv_{\mathbb Q}(X)$ and $\CDiv_{\mathbb R}(X)$,
but the converse does not hold.  For example, the Cartier divisor $(y/x)$
on the cuspidal cubic curve $y^2=x^3$ is not effective, but it is effective
as an $\mathbb Q$-Cartier or $\mathbb R$-Cartier divisor.

\subsection{B-divisors with coefficients in $\mathbb Z$, $\mathbb Q$,
and $\mathbb R$}

B-divisors consider divisors not only on a given variety, but also on
blowings-up of the variety.  The prefix `b' means \emph{birational.}

Let $X$ be a variety.
Following \cite{bdff}, a \textbf{model} over $X$ is a proper birational
morphism $\pi\colon X_\pi\to X$, modulo isomorphism over $X$.  This will be
denoted $X_\pi$.  The \textbf{Zariski--Riemann} space of $X$ is defined as
$$\mathfrak X = \varprojlim_\pi X_\pi\;,$$
and a \textbf{Cartier b-divisor} on a variety $X$ is an element of the group
$$\CDiv(\mathfrak X) := \varinjlim_\pi\CDiv(X_\pi)\;.$$
Thus, each Cartier b-divisor on $X$ comes from a Cartier divisor $D$
on some model $X_\pi$.  A Cartier b-divisor is said to be \textbf{effective}
if $X_\pi$ and $D$ can be chosen such that $D$ is effective.

An \textbf{$\mathbb R$-Cartier b-divisor} on $X$ is an element of the group
\begin{equation}
  \begin{split}
    \CDiv_{\mathbb R}(\mathfrak X) &:= \CDiv(\mathfrak X)\otimes\mathbb R \\
    &\cong \varinjlim_\pi\CDiv_{\mathbb R}(X_\pi)
  \end{split}
\end{equation}
An $\mathbb R$-Cartier b-divisor on $X$ is said to be \textbf{effective}
if it comes from an effective $\mathbb R$-Cartier divisor
on some model $X_\pi$.  The group $\CDiv_{\mathbb Q}(\mathfrak X)$ of
$\mathbb Q$-Cartier b-divisors on $X$ is defined similarly.

Assume briefly that the variety $X$ is normal.
By \cite[Remark~2.3]{RV}, a Cartier divisor on $X$ is effective
if and only if the corresponding Cartier b-divisor on $X$ is effective.
Therefore the map $\CDiv(X)\to\CDiv(\mathfrak X)$ (from Cartier divisors on $X$
to Cartier b-divisors on $X$) is injective and preserves effectivity.
Likewise, the maps
$\CDiv(\mathfrak X)\to\CDiv_{\mathbb Q}(\mathfrak X)
 \to\CDiv_{\mathbb R}(\mathfrak X)$ are injective and preserve effectivity.

If $X$ is not normal, then an effective Cartier divisor on $X$ still
remains effective as a Cartier b-divisor (or as a $\mathbb Q$-Cartier
or $\mathbb R$-Cartier b-divisor), but not conversely.
The latter is illustrated by the same example as before.

In contrast to (non-birational) Cartier divisors, though,
a Cartier b-divisor on an arbitrary variety $X$ is effective if and only if
it is effective as a $\mathbb Q$-Cartier or $\mathbb R$-Cartier b-divisor.
This is because we can replace $X$ with its normalization without affecting
any of these groups of b-divisors.

In \cite[Prop.~4.10a]{RV}, we showed that the group $\CDiv(\mathfrak X)$
is a \emph{lattice-ordered group}; i.e., an ordered group in which
every two elements have a least upper bound $\mathbf D_1\vee\mathbf D_2$.
This, in turn, relied on \cite[Lemma~4.8]{RV}, which stated that if $D$ is
a Cartier divisor on a variety $Y$, then there is a proper model
$\pi\colon Y'\to Y$ such that $\pi^{*}D$ is a difference of effective
Cartier divisors with disjoint supports.  This lemma remains true for
$\mathbb Q$-Cartier divisors (by multiplying the divisor in question by
a positive integer so as to clear denominators), but is no longer true
for $\mathbb R$-Cartier divisors.
An example of the latter is the $\mathbb R$-divisor $\alpha L_1-\beta L_2$,
where $L_1$ and $L_2$
are distinct lines on $\mathbb P^2$ and $\alpha,\beta>0$ are real numbers
such that $\alpha/\beta$ is irrational.

Therefore $\CDiv(\mathfrak X)_{\mathbb Q}$ is a lattice-ordered group,
but $\CDiv(\mathfrak X)_{\mathbb R}$ is not.  As it turns out, though,
this is not an issue here.  In this paper, the only instances of taking
the least upper bound or greatest lower bound of two b-divisors involve
only integral Cartier b-divisors.

\subsection{B-Weil functions for $\mathbb R$-Cartier b-divisors}

Now assume that $k$ is either a number field or the field $\mathbb C$,
and that $X$ is a complete variety over $k$.
Then a \textbf{b-Weil function} for a $\mathbb R$-Cartier b-divisor on $X$
can be defined by $\mathbb R$-linearity, using the definition of
b-Weil function for (integral) Cartier b-divisors, and these b-Weil functions
can be used as usual to define proximity and counting functions.

Moreover, \cite[Prop.~2.4 and Prop.~4.6]{RV} still hold in this context
(i.e., b-Weil functions for $\mathbb R$-Cartier b-divisors have
the same linearity, existence, and uniqueness properties;
an $\mathbb R$-Cartier b-divisor is effective if and only if
some associated b-Weil function is bounded from below by an $M_k$-constant;
and if $U$ is a nonempty Zariski-open subset of a complete variety
over a number field or over $\mathbb C$, then
any function $\lambda\colon U(M)\to\mathbb R$ extends to a b-Weil function
for at most one $\mathbb R$-Cartier b-divisor on $X$).

In addition, \cite[Lemma~2.5]{RV} extends to $\mathbb R$-Cartier divisors:

\begin{lemma}\label{rv_weil_max_fancy}
Let $X$ be a complete variety, and let $U_1,\dots,U_n$ be Zariski-open
subsets of $X$ that cover $X$.  Let $D_1,\dots,D_n$ be $\mathbb R$-Cartier
divisors on $X$ such that $D_i\big|_{U_i}$ is effective for all $i$, and
let $\lambda_{D_i}$ be Weil functions for $D_i$ for all $i$.
Then there is an $M_k$-constant $\gamma=(\gamma_v)$ such that, for all $v$
and all $x\in X(\overline k_v)$ there is an $i$ such that $x\in U_i$
and $\lambda_{D_i,v}(x)\ge \gamma_v$.
\end{lemma}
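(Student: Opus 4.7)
The plan is to reduce to the single-open-set case and then invoke the $\mathbb R$-linear extension of \cite[Prop.~4.6]{RV}, asserted in the discussion preceding the lemma: an $\mathbb R$-Cartier divisor is effective if and only if any associated Weil function is bounded from below by an $M_k$-constant. Thus the work in passing from the original integral statement \cite[Lemma~2.5]{RV} to the $\mathbb R$-Cartier version is packaged entirely into that linearity/effectivity fact.

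First I would observe that for each $i$, the restriction of $\lambda_{D_i}$ to $U_i$ is itself a Weil function for the restricted $\mathbb R$-Cartier divisor $D_i|_{U_i}$. For integral Cartier divisors this is immediate from the defining local formulas, and by $\mathbb R$-linearity in the definition of Weil function for $\mathbb R$-Cartier divisors it persists in our setting. Since $D_i|_{U_i}$ is effective by hypothesis, the extended \cite[Prop.~4.6]{RV} yields an $M_k$-constant $\gamma^{(i)}=(\gamma^{(i)}_v)$ such that
\[
  \lambda_{D_i,v}(x) \ge \gamma^{(i)}_v
\]
for every $v$ and every $x\in U_i(\overline k_v)\setminus\Supp(D_i)$; for $x\in\Supp(D_i)\cap U_i$ the inequality is vacuous, since $\lambda_{D_i,v}(x)=+\infty$ there.

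Next I would set $\gamma_v:=\min_{1\le i\le n}\gamma^{(i)}_v$. A finite minimum of $M_k$-constants is again an $M_k$-constant, because each $\gamma^{(i)}$ vanishes off a finite set of places, so the same holds for $\gamma$. Given $v$ and $x\in X(\overline k_v)$, the covering hypothesis produces some index $i$ with $x\in U_i$, and then
\[
  \lambda_{D_i,v}(x) \ge \gamma^{(i)}_v \ge \gamma_v\;,
\]
which is exactly the assertion of the lemma.

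The only substantive ingredient is the $\mathbb R$-Cartier extension of \cite[Prop.~4.6]{RV}, which is flagged in the paragraph immediately preceding the statement; everything else is organizational (restriction of a Weil function to an open subset, and the min over a finite cover). For this reason I do not expect any real obstacle: the argument is essentially a direct transfer of the proof of \cite[Lemma~2.5]{RV} through the $\mathbb R$-linear framework developed earlier in Section~\ref{r_divisors}, with no new geometric input required.
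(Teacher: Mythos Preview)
There is a genuine gap in your argument. You invoke the extended \cite[Prop.~4.6]{RV} on the open subset $U_i$, asserting that effectivity of $D_i|_{U_i}$ yields a single $M_k$-constant $\gamma^{(i)}$ with $\lambda_{D_i,v}(x)\ge\gamma^{(i)}_v$ for all $x\in U_i(\overline k_v)$. But that proposition is a statement about (b-)Weil functions on \emph{complete} varieties; the open sets $U_i$ are typically not complete, and the conclusion you draw is simply false in general. Concretely, take $X=\mathbb P^1$, $U_1=\mathbb A^1=\mathbb P^1\setminus\{\infty\}$, and $D_1=[0]-[\infty]=(x)$. Then $D_1|_{U_1}=[0]$ is effective, yet the Weil function $\lambda_{D_1,v}(P)=-\log|x(P)|_v$ (up to $O(1)$) tends to $-\infty$ as $P\in U_1(\overline k_v)$ approaches $\infty$. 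No uniform lower bound on $U_1$ exists. Thus the step that produces $\gamma^{(i)}$ collapses, and with it the final minimum.

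What is actually true, and what the paper establishes, is that $\lambda_{D_i}$ is \emph{locally} $M_k$-bounded from below on $U_i(M_k)$: after refining to affine $U_i$ on which $D_i|_{U_i}=\sum_j c_{ij}(f_{ij})$ with $c_{ij}>0$ and $f_{ij}\in\mathscr O(U_i)$, one compares $\lambda_{D_i}$ to $-\sum_j c_{ij}\log\|f_{ij}\|$ via \cite[Ch.~10, Prop.~1.3]{lang}. The passage from these local lower bounds to a single global $M_k$-constant then uses that the $U_i$ cover the \emph{complete} variety $X$, together with the standard machinery of $M_k$-bounded sets. In other words, the compactness of $X$ is used in an essential way to patch the local bounds, and cannot be bypassed by applying Prop.~4.6 open-by-open as you attempt.
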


\begin{proof}
By taking a finite refinement of $\{U_i\}$, we may assume that each $U_i$ is
affine, and that $D_i\big|_{U_i}=\sum_{j=1}^{n_i} c_{ij}(f_{ij})$ for all $i$,
where $f_{ij}\in\mathscr O(U_i)\setminus\{0\}$ and $c_{ij}\in\mathbb R_{>0}$
for all $i$ and $j$.

Then $\lambda_{D_i}$ is locally $M_k$-bounded from below on $U_i(M_k)$
for all $i$.  Indeed, by \cite[Ch.~10, Prop.~1.3]{lang},
the function $-\sum_j c_j\log\|f_{ij}\|$ is locally $M_k$-bounded from below
on $U_i(M_k)$ for all $i$;
therefore so is $\lambda_{D_i}$ since by definition of Weil function
and N\'eron function the difference between the two functions is
locally $M_k$-bounded.

The proof then concludes as in \cite{RV} (using standard properties
of $M_k$-bounded sets).
\end{proof}

Then \cite[Prop.~3.3]{RV} extends to $\mathbb R$-Cartier b-divisors as follows.

\begin{proposition}\label{rv_prop_nevprime_weil}
Let $X$ be a complete variety over a number field, let $D$ be
an effective $\mathbb R$-Cartier divisor on $X$, let $\mathscr L$ be
a line sheaf on $X$, let $V$ be a linear subspace of $H^0(X,\mathscr L)$
with $\dim V>1$, and let $\mu\in\mathbb R_{>0}$.

Consider the following conditions.
\begin{enumerate}
\item There exist a model $\pi\colon X_\pi\to X$ of $X$ such that
for all $Q\in X_\pi$ there is a basis $\mathcal B$ of $V$ such that
$$ \pi^{*}(\mathcal B) \ge \mu \pi^{*}D $$
in a Zariski-open neighborhood $U$ of $Q$, relative to the cone of
effective $\mathbb R$-Cartier divisors on $U$.
\item There are finitely many bases $\mathcal B_1,\dots,\mathcal B_\ell$
of $V$; Weil functions $\lambda_{\mathcal B_1},\dots,\lambda_{\mathcal B_\ell}$
for the divisors $(\mathcal B_1),\dots,(\mathcal B_\ell)$, respectively;
a Weil function $\lambda_D$ for $D$; and an $M_k$-constant $c$ such that
\begin{equation}
  \max_{1\le i\le\ell}\lambda_{\mathcal B_i} \ge \mu\lambda_D - c
\end{equation}
(as functions $X(M_k)\to\mathbb R\cup\{+\infty\}$).
\end{enumerate}
Then (i) implies (ii).
\end{proposition}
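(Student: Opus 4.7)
The plan is to mimic the proof of \cite[Prop.~3.3]{RV}, with Lemma \ref{rv_weil_max_fancy} (already stated for $\mathbb{R}$-Cartier divisors) playing the role of its $\mathbb{Q}$-Cartier predecessor \cite[Lemma~2.5]{RV}. The existence and $M_k$-uniqueness of b-Weil functions for $\mathbb{R}$-Cartier b-divisors, recorded earlier in this section, supply the other main ingredient.

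First I would exploit the noetherianness of $X_\pi$, which is complete because $\pi$ is proper and $X$ is complete. The open cover $\{U_Q\}_{Q\in X_\pi}$ from condition (i) admits a finite subcover, yielding bases $\mathcal{B}_1,\dots,\mathcal{B}_\ell$ of $V$ and Zariski-open sets $U_1,\dots,U_\ell$ with $\bigcup_i U_i = X_\pi$ and with $E_i := \pi^{*}(\mathcal{B}_i) - \mu\,\pi^{*}D$ effective as an $\mathbb{R}$-Cartier divisor on each $U_i$. Then I would apply Lemma \ref{rv_weil_max_fancy} on the complete variety $X_\pi$ to the cover $\{U_i\}$ and the divisors $E_i$, choosing some Weil functions $\lambda_{E_i}$. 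This produces an $M_k$-constant $\gamma=(\gamma_v)$ such that for every place $v$ and every $x'\in X_\pi(\overline{k}_v)$, some index $i$ satisfies $x'\in U_i$ and $\lambda_{E_i,v}(x')\ge\gamma_v$.

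Finally I would transfer the inequality back to $X$. Choose b-Weil functions $\lambda_{\mathcal{B}_i}$ for the b-divisors $(\mathcal{B}_i)$ on $X$ and a Weil function $\lambda_D$ for $D$; by the $\mathbb{R}$-linear extensions of \cite[Prop.~2.4 and Prop.~4.6]{RV}, the function $\lambda_{\mathcal{B}_i}\circ\pi - \mu\,\lambda_D\circ\pi$ agrees with $\lambda_{E_i}$ up to an $M_k$-bounded term on $X_\pi$. Combining this with the bound from the previous step and using surjectivity of $\pi$ to lift each $x\in X(\overline{k}_v)$ to some $x'\in X_\pi(\overline{k}_v)$, I obtain $\max_{1\le i\le\ell}\lambda_{\mathcal{B}_i}(x) \ge \mu\,\lambda_D(x)-c_v$ for a suitable $M_k$-constant $c=(c_v)$, which is (ii). The main technical nuisance I anticipate is bookkeeping: tracking several Weil-function choices and their $M_k$-bounded discrepancies under the pullback by $\pi$, and verifying that the $\mathbb{R}$-linear extension of the Weil-function formalism behaves exactly as in the $\mathbb{Z}$- or $\mathbb{Q}$-Cartier case; however, once the framework of Section \ref{r_divisors} and Lemma \ref{rv_weil_max_fancy} are available, no genuinely new ideas beyond the $\mathbb{Q}$-Cartier proof of \cite[Prop.~3.3]{RV} are required.
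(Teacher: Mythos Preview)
Your proposal is correct and follows essentially the same approach as the paper: the paper's proof simply says to repeat the argument of \cite[Prop.~3.3]{RV}, replacing \cite[Lemma~2.5]{RV} by Lemma~\ref{rv_weil_max_fancy} and omitting the step of clearing denominators by multiplying by a positive integer, which is exactly what you do. One cosmetic point: the divisors $(\mathcal B_i)$ are ordinary Cartier divisors on $X$, so ordinary Weil functions (as in condition~(ii)) suffice; calling them b-Weil functions is harmless but unnecessary.
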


The proof is the same as in \cite{RV}, except that Lemma \ref{rv_weil_max_fancy}
replaces \cite[Lemma~2.5]{RV} and the step of multiplying by a positive integer
$n$ is omitted.


\section{The Evertse--Ferretti Nevanlinna constant $\NevEF(\mathscr L,D)$}
\label{nev_ef}

This section provides more basic properties of the Evertse--Ferretti
Nevanlinna constant (Definition \ref{def_nevef}), and proves the Main Theorem.

\subsection{Growth conditions}

We first recall the definition of $\mu$-b-growth in \cite{RV}
which appeared in the definition of  $\Nevbir(L,D)$.  This can now be stated
using $\mathbb R$-Cartier divisors instead of $\mathbb Q$-Cartier divisors.

\begin{definition}[{\cite[Def.~4.12]{RV}}]\label{def_mu_b_growth}
Let $X$ be a complete variety, let $D$ be an effective
$\mathbb R$-Cartier divisor
on $X$, let $\mathscr L$ be a line sheaf on $X$, let $V$ be a linear subspace
of $H^0(X,\mathscr L)$ with $\dim V>1$, and let $\mu>0$ be a real number.
We say that $D$ has \textbf{$\mu$-b-growth with respect to $V$ and $\mathscr L$}
if there is a model $\phi\colon Y\to X$ of $X$ such that for all $Q\in Y$
there is a basis $\mathcal B$ of $V$ such that
\begin{equation}\label{eq_mu_b_growth}
  \phi^{*}(\mathcal B) \ge \mu \phi^{*}D
\end{equation}
in a Zariski-open neighborhood $U$ of $Q$, relative to the cone of
effective $\mathbb R$-divisors on $U$.  Also,
we say that $D$ has \textbf{$\mu$-b-growth with respect to $V$}
if it satisfies the above condition with $\mathscr L=\mathscr O(D)$.
\end{definition}

Basically, $\Nevbir(\mathscr L,D)$, according to its definition,
is the infimum of $(\dim V)/\mu$ over all triples $(N,V,\mu)$ such that
$\mu>0$ and $ND$ has $\mu$-b-growth with respect to $V$ and $\mathscr L^N$.

\begin{remark}
Def.~4.12 of \cite{RV} requires $X$ to be normal, but this is unnecessary
since the definition is birational in nature.
\end{remark}

\begin{remark}
If an effective $\mathbb R$-Cartier divisor $D$ has $\mu$-b-growth
with respect to $V$ and $\mathscr L$, then it also has $\mu'$-b-growth
with respect to $V$ and $\mathscr L$ for all $\mu'<\mu$.  Therefore,
if $D$ is $\mathbb Q$-Cartier (or Cartier), then the value of
$\Nevbir(\mathscr L,D)$ according to Definition \ref{def_nevbir} coincides
with the value according to \cite[Def.~4.12]{RV}, because one
can require $\mu$ to be rational in Definition \ref{def_nevbir}
without changing the value of $\Nevbir(\mathscr L,D)$.
\end{remark}

Here is an equivalent condition for $\mu$-b-growth that will be needed
for the proof of Proposition \ref{prop_b_vs_ef_growth}, which compares
$\mu$-b-growth with $\mu$-EF-growth.  This condition is a special case
of \cite[Prop.~4.13]{RV}.  For a fuller list of equivalent conditions,
see \emph{loc.~cit.}

\begin{proposition}\label{prop_mu_b_growth_equiv_iv}
Let $X$ be a complete variety, let $D$ be an effective
$\mathbb R$-Cartier divisor on $X$,
let $\mathscr L$ be a line sheaf on $X$, let $V$ be a linear subspace
of $H^0(X,\mathscr L)$ with $\dim V>1$, and let $\mu>0$ be a real number.
Then $D$ has $\mu$-b-growth with respect to $V$ and $\mathscr L$
if and only if the following condition is true.

For each place $v\in M_k$ there are finitely many bases
$\mathcal B_1,\dots,\mathcal B_\ell$ of $V$; local Weil functions
$\lambda_{\mathcal B_1,v},\dots,\lambda_{\mathcal B_\ell,v}$
for the divisors $(\mathcal B_1),\dots,(\mathcal B_\ell)$, respectively, at $v$;
a local Weil function $\lambda_{D,v}$ for $D$ at $v$; and a constant $c$
such that
\begin{equation}\label{eq_mu_b_growth_iv}
  \max_{1\le i\le\ell}\lambda_{\mathcal B_i,v} \ge \mu\lambda_{D,v} - c
\end{equation}
(as functions $X(\overline k_v)\to\mathbb R\cup\{+\infty\}$).
\end{proposition}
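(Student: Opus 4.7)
The plan is to derive this proposition directly from \cite[Prop.~4.13]{RV}, extending that result to the $\mathbb R$-Cartier setting in the same fashion that Proposition \ref{rv_prop_nevprime_weil} and Lemma \ref{rv_weil_max_fancy} were extended earlier in Section \ref{r_divisors}: one drops the step of multiplying $\mu$ by a positive integer to clear denominators and uses the $\mathbb R$-linear b-Weil function formalism throughout.

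For the implication (i) $\Rightarrow$ (ii), I would apply the $\mathbb R$-Cartier version of Proposition \ref{rv_prop_nevprime_weil}. This yields, globally, finitely many bases $\mathcal B_1,\dots,\mathcal B_\ell$ of $V$, Weil functions $\lambda_{\mathcal B_i}$ and $\lambda_D$, and an $M_k$-constant $c = (c_v)$ such that
$$ \max_{1\le i\le\ell} \lambda_{\mathcal B_i} \;\ge\; \mu\lambda_D - c $$
as functions $X(M_k)\to\mathbb R\cup\{+\infty\}$. Restricting to any single place $v$ gives the local inequality in (ii), with the same bases and the constant $c_v$.

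For the reverse implication, I would follow the corresponding step in \cite[Prop.~4.13]{RV}. From the local data at each $v$ one passes to a common model $\phi\colon Y\to X$ on which all of the $\mathcal B_i$ and $D$ are represented by honest divisors. Using the $\mathbb R$-linear analogues of \cite[Prop.~2.4 and Prop.~4.6]{RV} recorded in Section \ref{r_divisors} — namely, that an $\mathbb R$-Cartier b-divisor is effective if and only if some associated b-Weil function is bounded below by an $M_k$-constant, together with the uniqueness of b-Weil functions — the local bound (\ref{eq_mu_b_growth_iv}) forces, for each $Q\in Y$, the b-divisor $\phi^{*}(\mathcal B_i) - \mu\phi^{*}D$ to be effective in a Zariski-open neighborhood of $Q$ for at least one index $i$. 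This is precisely the geometric content of $\mu$-b-growth in Definition \ref{def_mu_b_growth}.

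The hardest step will be the reverse implication, because $\CDiv_{\mathbb R}(\mathfrak X)$ is not lattice-ordered (as noted in Section \ref{r_divisors}), while the $\mathbb Q$-Cartier proof of \cite[Prop.~4.13]{RV} makes free use of meets and joins of b-divisors. I would circumvent this by working with only the finitely many bases $\mathcal B_1,\dots,\mathcal B_\ell$ at a time and by checking effectivity pointwise through Weil functions, rather than by forming lattice operations in $\CDiv_{\mathbb R}(\mathfrak X)$; since all joins and meets that actually appear in the argument involve integral Cartier b-divisors (cf.\ the remarks at the end of Section \ref{r_divisors}), no real obstacle arises.
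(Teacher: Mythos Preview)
Your proposal is correct and matches the paper's approach: the paper does not give an independent proof of this proposition but simply observes that it is a special case of \cite[Prop.~4.13]{RV}, implicitly relying on the $\mathbb R$-Cartier extensions already recorded in Section~\ref{r_divisors} (Lemma~\ref{rv_weil_max_fancy} and Proposition~\ref{rv_prop_nevprime_weil}). Your sketch spells out in more detail exactly how that deferral works, including the observation that the only joins needed are of integral Cartier b-divisors, which is precisely the point the paper makes at the end of Section~\ref{r_divisors}.
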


We similarly rephrase the definition of
the Evertse--Ferretti Nevanlinna constant $\NevEF(\mathscr L,D)$
in terms of a definition that corresponds to that of $\mu$-b-growth,
and is suitable for applying the work of Evertse and Ferretti
\cite{ef_festschrift}.

\begin{definition}\label{def_mu_ef_growth}
Let $X$ be a complete variety, let $D$ be an effective $\mathbb R$-Cartier
divisor on $X$, let $\mathscr L$ be a line sheaf on $X$, and let $\mu>0$ be
a real number.  We say that $D$ has
\textbf{$\mu$-EF-growth with respect to $\mathscr L$} if there is
a model $\phi\colon Y\to X$ of $X$ such that for all $P\in Y$ there exist
a base-point-free linear subspace $V\subseteq H^0(X,\mathscr L)$
with $\dim V=\dim X+1$ and a basis $\mathcal B$ of $V$ such that
\begin{equation}\label{eq_mu_ef_growth}
  \phi^{*}(\mathcal B) \ge \mu \phi^{*}D
\end{equation}
in a Zariski-open neighborhood $U$ of $P$, relative to the cone of
effective $\mathbb R$-divisors on $U$.
Also, we say that $D$ has \textbf{$\mu$-EF-growth}
if it satisfies the above condition with $\mathscr L=\mathscr O(D)$.
\end{definition}

Similarly to what was done with $\mu$-b-growth, we have
the following proposition.

\begin{proposition}\label{prop_nevef_via_growth}
Let $\mathscr L$ and $D$ be as in Definition \ref{def_mu_ef_growth}.
Then the Evertse--Ferretti Nevanlinna constant $\NevEF(\mathscr L,D)$
is the infimum of $(\dim X+1)/\mu$ over all pairs $(N,\mu)$
such that $\mu>0$ is real and $ND$ has $\mu$-b-growth
with respect to $\mathscr L^N$.
\end{proposition}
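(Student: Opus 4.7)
The plan is simply to unpack both definitions and observe that they describe the same set of admissible pairs $(N,\mu)$, so the two infima coincide. Concretely, I would apply Definition \ref{def_mu_ef_growth} to the effective $\mathbb{R}$-Cartier divisor $ND$ and the line sheaf $\mathscr L^N$. By $\mathbb{R}$-linearity of pullback, the growth inequality
$$\phi^{*}(\mathcal B) \ge \mu\,\phi^{*}(ND)$$
appearing there is nothing other than
$$\phi^{*}(\mathcal B) \ge \mu N\,\phi^{*}D,$$
which is precisely the inequality (with the same dimensional constraint $\dim V = \dim X + 1$ and base-point-free requirement on $V$) that appears in Definition \ref{def_nevef} for the pair $(N,\mu)$.

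Given this identification, the forward direction is immediate: any pair $(N,\mu)$ that witnesses an admissible configuration in Definition \ref{def_nevef} yields, via the same model $\phi\colon Y\to X$, the same $Y$, the same base-point-free $V\subseteq H^0(X,\mathscr L^N)$ and the same bases $\mathcal B$ (which may vary with $Q$), an exhibit that $ND$ has $\mu$-EF-growth with respect to $\mathscr L^N$. Conversely, any pair $(N,\mu)$ for which $ND$ has $\mu$-EF-growth with respect to $\mathscr L^N$ is, by reading Definition \ref{def_mu_ef_growth} backwards, an admissible pair in Definition \ref{def_nevef}.

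Since the two collections of admissible pairs $(N,\mu)$ coincide, and the objective function $(\dim X + 1)/\mu$ is the same, the two infima are equal, proving the proposition.

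No step is a genuine obstacle here; the content is purely notational, analogous to the remark after Definition \ref{def_mu_b_growth} that recasts $\Nevbir(\mathscr L, D)$ in terms of $\mu$-b-growth. The only mild point to check is that one can freely move the factor $N$ between the divisor $D$ and the coefficient $\mu$, which is justified by working with $\mathbb{R}$-Cartier divisors throughout.
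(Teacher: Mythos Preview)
Your proposal is correct and matches the paper's approach: the paper states this proposition without proof, treating it as an immediate unwinding of definitions (exactly parallel to the sentence just before Definition~\ref{def_mu_b_growth} that recasts $\Nevbir$ via $\mu$-b-growth). You have also correctly read ``$\mu$-b-growth'' in the statement as the evident typo for ``$\mu$-EF-growth,'' since $\mu$-b-growth requires specifying a subspace $V$ and the proposition supplies none.
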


\begin{remark}\label{remk_EF_D_linear}
Let $\mathscr L$ and $D$ be as in Definition \ref{def_mu_ef_growth},
and let $\mu$ and $r$ be positive real numbers.
Then $rD$ has $(\mu/r)$-EF-growth with respect to $\mathscr L$
if and only if $D$ has $r\mu$-EF-growth with respect to $\mathscr L$, and
$$\NevEF(\mathscr L,rD) = r\NevEF(\mathscr L,D)\;.$$
\end{remark}

In regard to the definition of $\mu$-EF-growth, we have the following
equivalent statements (for notations, see \cite{RV}).  These parallel
the full list of \cite[Prop.~4.13]{RV}.

\begin{proposition}\label{prop_mu_ef_growth_equivs}
Let $X$ be a complete variety, let $D$ be an effective $\mathbb R$-Cartier
divisor on $X$, let $\mathscr L$ be a line sheaf on $X$, and let $\mu>0$ be a
real number.  Then the following are equivalent.
\begin{enumerate}
\item $D$ has $\mu$-EF-growth with respect to $\mathscr L$.
\item There are base-point-free linear subspaces $V_1,\dots,V_\ell$
of $H^0(X,\mathscr L)$, all of dimension $\dim X+1$, and corresponding bases
$\mathcal B_1,\dots,\mathcal B_\ell$ of $V_1,\dots,V_\ell$, respectively,
such that
\begin{equation}\label{eq_mu_ef_growth_ii}
  \bigvee_{i=1}^\ell (\mathcal B_i) \ge \mu D
\end{equation}
relative to the cone of effective $\mathbb R$-Cartier b-divisors.
\item There are base-point-free linear subspaces $V_1,\dots,V_\ell$
of $H^0(X,\mathscr L)$, all of dimension $\dim X+1$;
bases $\mathcal B_1,\dots,\mathcal B_\ell$ of $V_1,\dots,V_\ell$, respectively;
Weil functions $\lambda_{\mathcal B_1},\dots,\lambda_{\mathcal B_\ell}$
for the divisors $(\mathcal B_1),\dots,(\mathcal B_\ell)$, respectively;
a Weil function $\lambda_D$ for $D$; and an $M_k$-constant $c$ such that
\begin{equation}\label{eq_mu_ef_growth_iii}
  \max_{1\le i\le\ell}\lambda_{\mathcal B_i} \ge \mu\lambda_D - c
\end{equation}
(as functions $X(M_k)\to\mathbb R\cup\{+\infty\}$).
\item For each $v\in M_k$ there are base-point-free linear subspaces
$V_1,\dots,V_\ell$ of $H^0(X,\mathscr L)$, all of dimension $\dim X+1$;
bases $\mathcal B_1,\dots,\mathcal B_\ell$ of $V_1,\dots,V_\ell$, respectively;
local Weil functions
$\lambda_{\mathcal B_1,v},\dots,\lambda_{\mathcal B_\ell,v}$ at $v$
for the divisors $(\mathcal B_1),\dots,(\mathcal B_\ell)$, respectively;
a local Weil function $\lambda_{D,v}$ for $D$ at $v$; and a constant $c$
such that
\begin{equation}\label{eq_mu_ef_growth_iv}
  \max_{1\le i\le\ell}\lambda_{\mathcal B_i,v} \ge \mu\lambda_{D,v} - c
\end{equation}
(as functions $X(\overline k_v)\to\mathbb R\cup\{+\infty\}$).
\item The condition of (iv) holds for at least one place $v$.
\end{enumerate}
\end{proposition}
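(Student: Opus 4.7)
The plan is to prove the cycle (i) $\Rightarrow$ (ii) $\Rightarrow$ (iii) $\Rightarrow$ (iv) $\Rightarrow$ (v) $\Rightarrow$ (i), paralleling the argument of \cite[Prop.~4.13]{RV}. The only structural difference from \emph{loc.~cit.} is that here the base-point-free subspace $V \subseteq H^0(X,\mathscr L)$ of dimension $\dim X + 1$ is allowed to vary with the point, so the finite collections appearing in (ii)--(iv) involve several pairs $(V_i,\mathcal B_i)$ rather than a single fixed $V$.

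For (i) $\Rightarrow$ (ii), I would work on a common model $\phi\colon Y \to X$ witnessing $\mu$-EF-growth and apply quasi-compactness of $Y$ to the open cover $\{U_Q\}_{Q \in Y}$ supplied by Definition \ref{def_mu_ef_growth}. This yields a finite subcover with associated base-point-free subspaces $V_1,\dots,V_\ell$ of dimension $\dim X + 1$ and bases $\mathcal B_1,\dots,\mathcal B_\ell$ such that $\phi^{*}(\mathcal B_i) \ge \mu \phi^{*} D$ on each $U_i$. Since the divisors $(\mathcal B_i)$ are integral Cartier b-divisors, the join $\bigvee_i (\mathcal B_i)$ is well defined in $\CDiv(\mathfrak X)$ by \cite[Prop.~4.10a]{RV}, and the local inequalities together with the covering property force $\bigvee_i (\mathcal B_i) \ge \mu D$ as $\mathbb R$-Cartier b-divisors.

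For (ii) $\Rightarrow$ (iii), I would choose b-Weil functions $\lambda_{\mathcal B_i}$ and $\lambda_D$ and invoke the extensions of \cite[Prop.~2.4 and Prop.~4.6]{RV} described in Section \ref{r_divisors}: up to an $M_k$-constant, $\max_i \lambda_{\mathcal B_i}$ is a b-Weil function for $\bigvee_i (\mathcal B_i)$, and an $\mathbb R$-Cartier b-divisor is effective if and only if some associated b-Weil function is bounded below by an $M_k$-constant. Applying this to $\bigvee_i (\mathcal B_i) - \mu D$ yields (\ref{eq_mu_ef_growth_iii}). The implications (iii) $\Rightarrow$ (iv) $\Rightarrow$ (v) are immediate by restriction to a single place.

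The step I expect to be the main obstacle is (v) $\Rightarrow$ (i), where one must extract a global geometric statement on a birational model from an analytic inequality at a single place. Following the corresponding step in \cite[Prop.~4.13]{RV}, my plan is to pass to a model $\phi\colon Y \to X$ on which all the b-divisors $(\mathcal B_i)$ and $\phi^{*}D$ become honest $\mathbb R$-Cartier divisors, and to use Lemma \ref{rv_weil_max_fancy} together with the uniqueness part of \cite[Prop.~2.4]{RV} to convert (\ref{eq_mu_ef_growth_iv}) into the effectivity of $\bigvee_i \phi^{*}(\mathcal B_i) - \mu \phi^{*} D$. For each $P \in Y$ some index $i$ then realizes $\phi^{*}(\mathcal B_i) \ge \mu \phi^{*} D$ in a Zariski-open neighborhood of $P$, and the corresponding $V_i$ furnishes the required base-point-free subspace of dimension $\dim X + 1$. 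The delicate point is that the single-place hypothesis in (v) must suffice to produce a model $Y$ working uniformly, which is handled by the standard $M_k$-constant machinery already employed in \cite{RV}.
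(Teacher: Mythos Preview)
Your proposal is correct and follows essentially the same route as the paper. The paper organizes the argument slightly differently: rather than proving a cycle, it observes that (ii)$\iff$(iii) by \cite[Prop.~4.10]{RV} and (iii)$\iff$(iv)$\iff$(v) by \cite[Prop.~2.4]{RV}, then proves (ii)$\Rightarrow$(i) by invoking \cite[Lemma~4.14]{RV} and (i)$\Rightarrow$(ii) by the same quasi-compactness argument you describe; in particular your step (v)$\Rightarrow$(i) factors through (ii) exactly as the paper's chain does, though note that Lemma~\ref{rv_weil_max_fancy} is the tool for the (i)$\Rightarrow$(iii) direction rather than the reverse, so the relevant citation for extracting (i) from the effectivity of the join is \cite[Lemma~4.14]{RV}.
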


Note that the join in (\ref{eq_mu_ef_growth_ii}) exists, because it
involves only integral Cartier b-divisors.

\begin{proof}
Conditions (ii) and (iii) are equivalent by \cite[Prop.~4.10]{RV},
and (iii)--(v) are equivalent by \cite[Prop.~2.4]{RV}.
The implication (ii)$\implies$(i) follows from \cite[Lemma~4.14]{RV},
with $D_i=(\mathcal B_i)$ for all $i$.  (Note that, in the latter lemma,
one can allow $\mu$ to be real and $D$ to be an $\mathbb R$-Cartier divisor:
when working with $\mathbb R$-divisors in the proof, the step of multiplying
by a positive integer $n$ is unnecessary; also,
the last few lines in the proof work equally well with $\mathbb R$-Cartier
divisors in place of $\mathbb Q$-Cartier divisors.)

To finish the proof, it will suffice to show that (i) implies (ii).

Assume that condition (i) is true.  Let $\phi\colon Y\to X$ be a model
that satisfies the condition of Definition \ref{def_mu_ef_growth}.
By quasi-compactness of $Y$,
we may assume that only finitely many triples $(U,V,\mathcal B)$ occur.
Let $(U_1,V_1,\mathcal B_1),\dots,(U_\ell,V_\ell,\mathcal B_\ell)$
be those triples.  We may assume that $\bigvee(\mathcal B_i)$ is represented
by a Cartier divisor $E$ on $Y$.  Then, for each $i$,
$$E\big|_{U_i} \ge \phi^{*}(\mathcal B_i)\big|_{U_i}
  \ge \mu\phi^{*}D\big|_{U_i}$$
relative to the cone of effective $\mathbb R$-Cartier divisors on $U_i$.
This gives (\ref{eq_mu_ef_growth_ii}).
\end{proof}

\subsection{Mumford's theory of degree of contact}

The proof of the Main Theorem also relies on some results from
Mumford's theory of degree of contact.

In this theory, we let $k$ be a field of characteristic $0$,
let $x_1,\dots,x_q$ be homogeneous coordinates on $\mathbb P^{q-1}_k$,
let $Y\subseteq\mathbb P^{q-1}_k$ be a projective variety,
let $H_Y(m)=\dim_k H^0(Y,\mathscr O(m))$ be the Hilbert function of $Y$,
let $I_Y\subseteq k[x_1,\dots,x_q]$ be the ideal of $Y$,
let $S(Y)=k[x_1,\dots,x_q]/I_Y$ be the homogeneous coordinate ring of $Y$,
and let $\mathbf c=(c_1,\dots,c_q)\in\mathbb R_{\ge0}^q$.
Then (as is standard) for all $m\ge0$ we define
$$S_Y(m,\mathbf c)
  = \max\left(\sum_{i=1}^{H_Y(m)} \mathbf a_i\cdot\mathbf c\right)\;,$$
where the maximum is taken over all sets of monomials
$x^{\mathbf a_1}\dotsm x^{\mathbf a_{H_Y(m)}}$ whose residue classes
modulo $I_Y$ form a basis for $S(Y)_m=H^0(Y,\mathscr O(m))$.

In Mumford's theory it is better to consider only varieties that are
\emph{geometrically integral,} because $H^0(Y,\mathscr L)$ is much harder
to control when $Y$ becomes reducible upon base change to a larger field.
Fortunately, though, theorems about Zariski-dense sets of rational points
are all vacuously true on (integral) varieties that are not
geometrically integral, because of the following simple fact.

\begin{lemma}\label{lemma_non_geom_irred}
Let $Y$ be an (integral) variety over a field $k$ of characteristic $0$.
If $Y(k)$ is Zariski dense, then $Y$ is geometrically integral.
\end{lemma}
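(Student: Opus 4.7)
The plan is to show that $Y$ is geometrically reduced and geometrically irreducible. Since $k$ has characteristic $0$, the function field $k(Y)$ is separable over $k$, so $Y$ is automatically geometrically reduced. Thus it suffices to prove geometric irreducibility, which (together with reducedness) is equivalent to the statement that $k$ is algebraically closed inside $k(Y)$.

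Suppose for contradiction that there exists $\alpha \in k(Y) \setminus k$ algebraic over $k$, with minimal polynomial $f \in k[t]$. Let $U \subseteq Y$ be a nonempty Zariski-open subset on which $\alpha$ is a regular function. I first claim that $U(k)$ is Zariski-dense in $U$: if its closure $Z$ inside $U$ were proper, then the closure of $Z$ inside $Y$, together with $Y \setminus U$, would form a proper closed subset of $Y$ containing all of $Y(k)$, contradicting the density hypothesis together with irreducibility of $Y$.

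Now for every $P \in U(k)$, the value $\alpha(P) \in k$ is a root of $f$, so $\alpha$ takes only finitely many values $c_1, \dots, c_n \in k$ on $U(k)$. Therefore
\begin{equation}
  U(k) = \bigcup_{i=1}^n \bigl(U(k) \cap V(\alpha - c_i)\bigr),
\end{equation}
where $V(\alpha - c_i)$ denotes the closed subscheme of $U$ cut out by $\alpha - c_i$. Since this is a finite union and $U(k)$ is Zariski-dense in the irreducible variety $U$, at least one of the closed subsets $V(\alpha - c_i)$ must equal $U$. But then $\alpha = c_i$ in $\mathcal O(U)$, and hence in $k(Y)$, contradicting $\alpha \notin k$. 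This completes the proof.

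The argument is essentially elementary and I do not expect any serious obstacle; the only point requiring a little care is the preliminary reduction that $U(k)$ inherits Zariski-density from $Y(k)$, which one handles by the closure argument above using irreducibility of $Y$.
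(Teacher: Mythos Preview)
Your proof is correct, but the route differs from the paper's. The paper argues as follows: since $Y_{\text{reg}}$ is open and dense, it contains a rational point $P$; the local ring $\mathscr O_{Y,P}$ is then regular, hence normal, and therefore contains the algebraic closure $k'$ of $k$ in $K(Y)$; passing to the residue field gives $k'\subseteq\kappa(P)=k$, so $k'=k$. Thus the paper uses only a \emph{single} smooth rational point together with normality of regular local rings, whereas you use the full density of $Y(k)$ via evaluation of $\alpha$ at many points. Your approach is more elementary (no appeal to normality), while the paper's extracts a sharper statement: the existence of one smooth $k$-point already forces geometric integrality. One minor remark: your argument can be shortened, since the minimal polynomial $f$ is irreducible of degree $\ge2$ over $k$ and hence has \emph{no} roots in $k$; thus $U(k)=\emptyset$ immediately, contradicting density, and the subsequent pigeonhole step with the $V(\alpha-c_i)$ is unnecessary.
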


\begin{proof}
Since $Y_{\text{reg}}$ is open and Zariski dense, it contains a rational
point.  The existence of such a point implies that $Y$ is geometrically
integral.  Indeed, let $P$ be such a point, and let $k'$ be the algebraic
closure of $k$ in the function field $K(Y)$.  Then $\mathscr O_{Y,P}$ is
regular, hence normal.  Since $k'$ is integral over $k$, $\mathscr O_{Y,P}$
contains $k'$.  Therefore its residue field contains $k'$, so $k=k'$
is algebraically closed in $K(Y)$, and therefore $Y$ is geometrically integral.
\end{proof}

\begin{lemma}\label{S_Y_is_geometric}
Let $k$ and $q$ be as above, let $Y\subseteq\mathbb P^{q-1}_k$ be
a geometrically integral projective variety, and let $E$ be an
extension field of $k$.
Let $Y_E=Y\times_k E$, viewed as a projective variety in $\mathbb P^{q-1}_E$
via the map obtained from $Y\hookrightarrow\mathbb P^{q-1}_k$ by base change.
Then, for all $\mathbf c\in\mathbb R_{\ge0}^q$ and all $m\ge0$,
\begin{equation}
  H_{Y_E}(m) = H_Y(m)
\end{equation}
and
\begin{equation}
  S_{Y_E}(m,\mathbf c) = S_Y(m,\mathbf c)\;.
\end{equation}
\end{lemma}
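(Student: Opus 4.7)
The plan is to deduce both equalities from a single observation: a collection of monomials $x^{\mathbf a_1},\dots,x^{\mathbf a_N}$ in $k[x_1,\dots,x_q]$ has residue classes forming a basis of $S(Y)_m$ if and only if, viewed in $E[x_1,\dots,x_q]$, it has residue classes forming a basis of $S(Y_E)_m$. Since the weight $\sum_i \mathbf a_i\cdot\mathbf c$ depends only on the multi-indices and not on the field over which we work, this observation gives $S_{Y_E}(m,\mathbf c)=S_Y(m,\mathbf c)$ directly, and (by applying it to any single monomial basis) also gives $H_{Y_E}(m)=H_Y(m)$.

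To establish the biconditional, I would first identify $S(Y_E)_m$ with $S(Y)_m\otimes_k E$ as $E$-vector spaces. This identification reduces to showing that the homogeneous ideal of $Y_E$ in $E[x_1,\dots,x_q]$ equals $I_Y\otimes_k E$. Here is where the geometric integrality of $Y$ enters: it ensures that $I_Y\otimes_k E$ remains a prime ideal in $E[x_1,\dots,x_q]$ (this is essentially the definition of geometric integrality applied to the affine cone), hence is already radical and saturated, and therefore coincides with the ideal of $Y_E$. Graded piece by graded piece, this yields $S(Y_E)_m \cong S(Y)_m\otimes_k E$.

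Given this identification, the biconditional on monomial bases follows from flatness of $E$ over $k$ (any field extension is flat). Namely, fix monomials $x^{\mathbf a_1},\dots,x^{\mathbf a_N}\in k[x_1,\dots,x_q]$ of degree $m$, and consider the $k$-linear map $\phi\colon k^N\to S(Y)_m$ sending $e_i$ to the residue class of $x^{\mathbf a_i}$. Base change to $E$ produces the analogous map $\phi_E\colon E^N\to S(Y_E)_m$, and by flatness $\ker(\phi_E)=\ker(\phi)\otimes_k E$ and $\operatorname{coker}(\phi_E)=\operatorname{coker}(\phi)\otimes_k E$. Hence $\phi$ is injective (respectively surjective, respectively bijective) if and only if $\phi_E$ is. In particular the monomials form a $k$-basis of $S(Y)_m$ if and only if they form an $E$-basis of $S(Y_E)_m$, completing the proof.

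The main obstacle is the ideal-theoretic step: verifying that $I_{Y_E}=I_Y\otimes_k E$, or equivalently that the coordinate ring $S(Y)$ commutes with base change. Everything else is a formal consequence of flatness. The geometric integrality hypothesis is used exactly once, to ensure this equality of ideals (without it one would have to pass to radicals, which could strictly enlarge the ideal and shrink the graded pieces).
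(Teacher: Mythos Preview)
Your proof is correct. It reaches the same conclusion as the paper by a closely related but slightly more hands-on route.

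The paper's argument is a one-liner: flat base change for cohomology gives $H^0(Y_E,\mathscr O(m))=H^0(Y,\mathscr O(m))\otimes_k E$, whence $H_{Y_E}(m)=H_Y(m)$, and then ``the computation is the same in both cases'' for $S_Y$. In other words, the paper invokes a general cohomological theorem to obtain the base-change identification of graded pieces, and leaves the monomial-basis correspondence implicit. You instead work directly with the homogeneous coordinate ring: you argue that geometric integrality forces $I_Y\otimes_k E$ to be prime (hence saturated), so $I_{Y_E}=I_Y\otimes_k E$ and $S(Y_E)_m=S(Y)_m\otimes_k E$; then you spell out explicitly, via flatness of $E$ over $k$, why monomial bases correspond. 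Your argument is more elementary and self-contained; the paper's is shorter but leans on a bigger hammer. One minor point worth noting: the cohomological base-change statement the paper uses does not itself require geometric integrality (it holds for any projective scheme), so in the paper's framing that hypothesis is only needed so that $Y_E$ is again a variety and the definitions make sense, whereas in your approach geometric integrality is used substantively to ensure the tensored ideal is prime.
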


\begin{proof}
We have $H^0(Y_E,\mathscr O(m))=H^0(Y,\mathscr O(m))\otimes_k E$
(by cohomology and flat base change, for example).
Therefore $H_Y(m)=H_{Y_E}(m)$ for all $m$.
Also $S_Y(m,\mathbf c) = S_{Y_E}(m,\mathbf c)$ because the computation
is the same in both cases.
\end{proof}

\begin{proposition}
Let $k$ and $q$ be as above, let $Y\subseteq\mathbb P^{q-1}_k$ be a
geometrically integral projective variety, and let $n=\dim Y$.
Assume that $Y$ is not contained in any coordinate hyperplane
of $\mathbb P^{q-1}$.  Then the inequality
\begin{equation}\label{contact_ineq}
  c_{j_0}+\dots+c_{j_n}
  \le (n+1)\left(\frac{S_Y(m,\mathbf c)}{mH_Y(m)}\right)(1+O(1/m))
\end{equation}
holds for all $\mathbf c\in\mathbb R_{\ge0}^q$ and all collections
$j_0,\dots,j_n$ of indices such that the linear system on $Y$
generated by $x_{j_0},\dots,x_{j_n}$ is base point free.
Here the implicit constant in $O(1/m)$ depends only on $Y$.
\end{proposition}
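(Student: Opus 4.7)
The plan is to exploit the finite morphism $\phi\colon Y \to \mathbb{P}^n$ determined by the base-point-free linear system $V = \langle x_{j_0}, \dots, x_{j_n}\rangle$, and then to exhibit a monomial spanning set of $S(Y)_m$ most of whose elements have large $\mathbf{c}$-weight.  First, I would verify that $\phi$ is finite: the morphism exists by base-point-freeness of $V$, and since $\phi^{*}\mathscr{O}_{\mathbb{P}^n}(1) = \mathscr{O}_Y(1)$ is ample and $\phi$ is proper, the standard fact that a proper morphism pulling back an ample line bundle to an ample one is finite gives the conclusion.  Because $\dim Y = n = \dim \mathbb{P}^n$ and $\mathbb{P}^n$ is irreducible, $\phi$ is also surjective, so $R := k[x_{j_0}, \dots, x_{j_n}]$ injects into $S(Y)$, and $S(Y)$ becomes a finitely generated, torsion-free graded $R$-module of generic rank $d = \deg(Y)$.

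By base-point-freeness, the quotient $S(Y)/R_{>0}S(Y)$ is a finite-dimensional $k$-vector space, so graded Nakayama furnishes finitely many monomials $x^{\mathbf{b}_1}, \dots, x^{\mathbf{b}_D}$ of degrees $e_\alpha \le E$, with each $\mathbf{b}_\alpha$ supported on $\{1, \dots, q\} \setminus \{j_0, \dots, j_n\}$, whose residues generate $S(Y)$ as an $R$-module. The set
$$
\mathcal{T}(m) := \bigl\{x^{\mathbf{b}_\alpha}\, x_{j_0}^{a_0}\dotsm x_{j_n}^{a_n} : 1 \le \alpha \le D,\ a_0 + \dots + a_n = m - e_\alpha\bigr\}
$$
thus spans $S(Y)_m$, and applying the symmetric identity $\sum_{|\mathbf{a}|=k} a_\ell = \tfrac{k}{n+1}\binom{k+n}{n}$ to each index $\alpha$ and summing over $\ell = 0, \dots, n$ yields
$$
W(\mathcal{T}(m)) = \sum_{\alpha}\binom{m-e_\alpha+n}{n}\Bigl[\mathbf{c}\cdot\mathbf{b}_\alpha + \tfrac{(m-e_\alpha)(c_{j_0}+\dots+c_{j_n})}{n+1}\Bigr],
$$
whose leading term in $m$ is $\tfrac{(c_{j_0}+\dots+c_{j_n})\,D}{(n+1)!}\, m^{n+1}$.

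Finally, I would extract a monomial basis $\mathcal{B} \subseteq \mathcal{T}(m)$ of $S(Y)_m$ (of size $H_Y(m) \sim d\, m^n/n!$) and bound its total weight from below.  In the Cohen--Macaulay case $D = d$, the excess $|\mathcal{T}(m)| - H_Y(m)$ is $O(m^{n-1})$, so any extraction discards only $O(m^n\,\|\mathbf{c}\|)$ weight; together with $m H_Y(m) \sim d\, m^{n+1}/n!$ this yields
$$
S_Y(m,\mathbf{c}) \;\ge\; \tfrac{c_{j_0}+\dots+c_{j_n}}{n+1}\, m\, H_Y(m)\,\bigl(1 - O(1/m)\bigr),
$$
which rearranges to \eqref{contact_ineq} with an $O(1/m)$ term depending only on $Y$ (since one may reduce to $\mathbf{c}$ supported on $\{j_0,\dots,j_n\}$ via $S_Y(m,\mathbf{c}) \ge S_Y(m, \mathbf{c}_J)$, bounding $\|\mathbf{c}_J\|$ by $c_{j_0}+\dots+c_{j_n}$).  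The main obstacle is the general non-Cohen--Macaulay case $D > d$, where the excess is $\Theta(m^n)$ and a crude extraction could discard too much weight.  Following Evertse and Ferretti \cite{ef_festschrift}, this is remedied by performing the same accounting against a minimal graded free resolution of $S(Y)$ over $R$, so that the alternating sum $\sum_i (-1)^i \sum_{\alpha\in F_i}\binom{m - e_{\alpha,i} + n}{n}$ equals $H_Y(m)$ up to a polynomial of degree $\le n-1$ in $m$ and the syzygies cancel the spurious relations, delivering the required asymptotic.
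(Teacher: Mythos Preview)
Your route is genuinely different from the paper's.  The paper does not construct any explicit monomial basis; instead it invokes the Evertse--Ferretti/Mumford degree-of-contact machinery as a black box (via \cite[Lemma~3.2]{ru}, combining \cite[Thm.~4.1]{ef_imrn} and \cite[Lemma~3.2]{ru_annals}) to obtain
\[
  c_{j_0}+\dots+c_{j_n} \le \frac{e_Y(\mathbf c)}{\Delta}
  \le \frac{n+1}{m}\left(\frac{S_Y(m,\mathbf c)}{H_Y(m)} + (2n+1)\Delta\max_j c_j\right),
\]
and then removes the unwanted $\max_j c_j$ term by a short direct calculation showing $\max_j c_j = O\bigl(S_Y(m,\mathbf c)/(mH_Y(m))\bigr)$ (choose $j_0$ with $c_{j_0}=\max_j c_j$, extend to a transcendence basis, and sum the weights of the resulting independent monomials).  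Your approach, by contrast, tries to bound $S_Y(m,\mathbf c)$ from below by exhibiting an explicit monomial basis coming from the finite cover $Y\to\mathbb P^n$; this is more elementary, avoids the Chow form entirely, and your monotonicity reduction to $\mathbf c$ supported on $\{j_0,\dots,j_n\}$ is a clean way to control the error term.

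There is, however, a real gap in your non--Cohen--Macaulay paragraph.  The alternating sum from a graded free resolution computes $H_Y(m)$ \emph{exactly} (for $m$ large), but that is only a dimension count; it does not tell you which monomials to discard from $\mathcal T(m)$ to get a basis, and ``the syzygies cancel the spurious relations'' is not an argument that produces a monomial basis of the required weight.  (Also, the argument in \cite{ef_festschrift} goes through the Chow weight $e_Y(\mathbf c)$, not a free resolution.)  A simple repair within your framework: among your $D$ monomial generators $x^{\mathbf b_\alpha}$, pick $d$ of them that are linearly independent over $\operatorname{Frac}(R)$.  They generate a free graded $R$-submodule $F\subseteq S(Y)$ of rank $d$, so $S(Y)/F$ is torsion and $\dim_k(S(Y)/F)_m=O(m^{n-1})$.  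The corresponding monomials $x^{\mathbf b_{\alpha_i}}\prod_\ell x_{j_\ell}^{a_\ell}$ are then linearly independent in $S(Y)_m$ of cardinality $H_Y(m)-O(m^{n-1})$, and after extending to a full monomial basis your Cohen--Macaulay weight computation goes through verbatim.
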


\begin{proof}  By Lemma \ref{S_Y_is_geometric} we may assume that $k$ is
algebraically closed.

Let $\mathbf c$ and $j_0,\dots,j_n$ be as in the statement of the theorem,
and let $m\in\mathbb Z_{>0}$.  By the theory of degree of contact
(see \cite[Lemma~3.2]{ru}, which combines \cite[Thm.~4.1]{ef_imrn}
and \cite[Lemma~3.2]{ru_annals}),
\begin{equation}\label{eq_ef_vs_b_1}
  c_{j_0}+\dots+c_{j_n}
  \le \frac{e_Y(\mathbf c)}{\Delta}
  \le \frac{n+1}{m}\left(\frac{S_Y(m,\mathbf c)}{H_Y(m)}
    + (2n+1)\Delta\max_{1\le j\le q} c_j\right)\;.
\end{equation}

We claim that
\begin{equation}\label{bound_max_c_j}
  \max_j c_j \le O\left(\frac{S_Y(m,\mathbf c)}{mH_Y(m)}\right)\;,
\end{equation}
with the implicit constant depending only on $Y$.
Pick $j_0,\dots,j_n$ such that that $c_{j_0}=\max_j c_j$
and $x_{j_1}/x_{j_0},\dots,x_{j_n}/x_{j_0}$ form a transcendence base
for $K(Y)$ over $k$.
Then, for each $m>0$, the monomials in $x_{j_0},\dots,x_{j_n}$ of
total degree $m$ are linearly independent in $H^0(Y,\mathscr O(m))$,
so the set $\{x_{j_0}^{\ell_0}\dotsm x_{j_n}^{\ell_n}:\ell_0+\dots+\ell_n=m\}$
of such monomials can be extended to a basis of $H^0(Y,\mathscr O(m))$.
From the definition of $S_Y(m,\mathbf c)$, and
the fact that $H_Y(m)=\frac\Delta{n!}m^n+O(m^{n-1})$, it then follows that
\[
  \begin{split} S_Y(m,\mathbf c)
    &\ge \sum_{\ell_0+\dots+\ell_n=m} c_{j_0}\ell_0 \\
    &= \frac{c_{j_0}m}{n+1}\binom{n+m}{n} \\
    &= \frac{c_{j_0}m}{n+1}\left(\frac{m^n}{n!} + O(m^{n-1})\right) \\
    &= \frac{c_{j_0}}{n+1}\left(\frac1\Delta + O\left(\frac1m\right)\right)
      mH_Y(m)\;.
  \end{split}
\]
This proves (\ref{bound_max_c_j}).  Combining (\ref{eq_ef_vs_b_1}) and
(\ref{bound_max_c_j}) then gives (\ref{contact_ineq}).
\end{proof}

\subsection{Proof of the Main Theorem}

We are now ready to prove the Main Theorem.  This, in turn, reduces to
the following Proposition, which compares the notions of $\mu$-b-growth
and $\mu$-EF-growth.

\begin{proposition}\label{prop_b_vs_ef_growth}
Let $X$ be a geometrically integral variety over a number field $k$,
let $D$ be an effective $\mathbb R$-Cartier divisor on $X$,
let $\mathscr L$ be a line sheaf on $X$, and let $\mu>0$ be a real number.
Assume that $D$ has $\mu$-EF-growth with respect to $\mathscr L$.
Then, for all $\epsilon>0$, there exist a positive integer $m$,
a real number $\nu$, and a linear subspace $V\subseteq H^0(X,\mathscr L^m)$,
such that $mD$ has $\nu$-b-growth with respect to $V$ and $\mathscr L^m$,
and such that
\begin{equation}\label{eq_b_vs_ef_nu_bound}
  \frac{\dim V}{\nu} \le \frac{\dim X + 1}{\mu} + \epsilon\;.
\end{equation}
\end{proposition}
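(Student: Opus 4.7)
The plan is to reformulate the $\mu$-EF-growth hypothesis in Weil-function form at a single place and then convert it to $\nu$-b-growth via Mumford's degree-of-contact theory applied to the image varieties of the linear systems supplied by EF-growth. By Proposition~\ref{prop_mu_ef_growth_equivs}(v), one fixes a place $v \in M_k$ and selects finitely many base-point-free linear subspaces $V_1, \dots, V_\ell \subseteq H^0(X, \mathscr{L})$ of dimension $n+1$ (with $n = \dim X$), bases $\mathcal{B}_i = (s_{i,0},\dots,s_{i,n})$ of $V_i$, and a constant $c$ so that
\[ \max_{1 \le i \le \ell} \lambda_{\mathcal{B}_i,v}(P) \;\ge\; \mu\, \lambda_{D,v}(P) - c \qquad \text{for all } P \in X(\overline k_v). \]
Each $V_i$ defines a morphism $\phi_i\colon X \to \mathbb{P}^n$ with image $Y_i = \phi_i(X)$; the geometric integrality of $X$ yields that of $Y_i$, and linear independence of $\mathcal{B}_i$ ensures that $Y_i$ is not contained in any coordinate hyperplane.

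Fix a large integer $m$ (to be chosen in terms of $\epsilon$) and set $W_i := \phi_i^{*}H^0(Y_i,\mathscr{O}(m)) \subseteq H^0(X,\mathscr{L}^m)$, which has dimension $H_{Y_i}(m)$ by injectivity of the pullback along the dominant morphism $X \to Y_i$. For each $P$ and the maximizing index $i = i(P)$, write $\mathbf{c}_i(P) = (\lambda_{s_{i,0},v}(P),\dots,\lambda_{s_{i,n},v}(P))$; Mumford's construction supplies a monomial basis $\mathcal{M}_i(P)$ of $W_i$ whose sum of local Weil functions at $P$ equals $S_{Y_i}(m,\mathbf{c}_i(P))$, and the degree-of-contact inequality~\eqref{contact_ineq} gives
\[ \lambda_{\mathcal{B}_i,v}(P) \;\le\; (n+1)\,\frac{S_{Y_i}(m,\mathbf{c}_i(P))}{m\, H_{Y_i}(m)}\,\bigl(1+O(1/m)\bigr). \]
Combining with the EF-growth estimate at the maximizing $i$ then yields the lower bound $S_{Y_i}(m,\mathbf{c}_i(P)) \ge \tfrac{\mu}{n+1}\, m\, H_{Y_i}(m)\, \lambda_{D,v}(P)\bigl(1 - O(1/m)\bigr) - O(1)$.

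Set $V := W_1 + \cdots + W_\ell \subseteq H^0(X,\mathscr{L}^m)$ and choose $\nu := \mu\, H_{*}(m)/\bigl((n+1)(1+O(1/m))\bigr)$, where $H_{*}(m)$ is a uniform lower bound on the $H_{Y_i}(m)$. At each $P$, extending $\mathcal{M}_{i(P)}(P)$ to a basis of $V$ by adjoining arbitrary sections preserves the Weil-function lower bound up to a constant, since divisors of sections are effective. Proposition~\ref{prop_mu_b_growth_equiv_iv} then certifies that $mD$ has $\nu$-b-growth with respect to $V$ and $\mathscr{L}^m$, and the asymptotics $H_{Y_i}(m) \sim \deg(Y_i)\, m^{n_i}/n_i!$ with large $m$ govern the ratio $\dim V / \nu$.

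The main obstacle is to control $\dim V / \nu$ so that it approaches $(n+1)/\mu$ rather than a worse $\ell\cdot (n+1)/\mu$: the crude bound $\dim V \le \sum_i H_{Y_i}(m)$ costs an extra factor of $\ell$. One must refine the construction, either by choosing $V$ more carefully so that different Mumford bases share sections, or by reducing the hypothesis to the case $\ell = 1$ by passing to a suitable model refinement. A secondary subtlety is the degenerate case $n_i := \dim Y_i < n$: inequality~\eqref{contact_ineq} then bounds only $n_i + 1$ of the Weil coordinates, so~\eqref{bound_max_c_j} must be invoked to extend the bound to the full sum $\lambda_{\mathcal{B}_i,v}(P)$.
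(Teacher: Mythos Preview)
Your proof sketch correctly identifies the core ingredients (the Weil-function reformulation at a single place and Mumford's degree-of-contact inequality), but the ``main obstacle'' you flag is a real gap, and neither of your suggested fixes resolves it. Taking $V = W_1 + \cdots + W_\ell$ with separate $W_i = \phi_i^{*}H^0(Y_i,\mathscr O(m))$ does cost you an uncontrolled factor of $\ell$ in $\dim V/\nu$, and there is no reason for the Mumford monomial bases of the different $W_i$ to share sections in any useful way. Reducing to $\ell=1$ by passing to a finer model is also not available: the whole point of EF-growth is that the base-point-free subspace $V_i$ is allowed to vary with the point $Q$, and no single $(n+1)$-dimensional $V_i$ will generally work for all of $Y$.

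The paper's resolution is simple but decisive: instead of mapping $X$ separately via each $V_i$, one pools \emph{all} the sections $\{s_1,\dots,s_q\} = \bigcup_i \mathcal B_i$ into a \emph{single} morphism $\Phi\colon X \to \mathbb P^{q-1}$ with image $Y$. Each basis $\mathcal B_i$ then corresponds to a choice of $n+1$ coordinate indices $j_0,\dots,j_n$ such that $x_{j_0},\dots,x_{j_n}$ generate a base-point-free linear system on $Y$ (because $V_i$ is base-point-free on $X$). The contact inequality~\eqref{contact_ineq} is applied to this one $Y$ with varying index sets, and one takes $V = \Phi^{*}H^0(Y,\mathscr O(m))$ --- a \emph{single} space of dimension $H_Y(m)$. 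The monomial bases of $H^0(Y,\mathscr O(m))$ that realize $S_Y(m,\mathbf c)$ pull back to bases of this same $V$ regardless of which $i$ maximizes, so no factor of $\ell$ appears: one gets $\dim V/\nu = (n+1)(1+O(1/m))/\mu$ with $n=\dim Y\le\dim X$, and~\eqref{eq_b_vs_ef_nu_bound} follows for $m$ large. Your secondary concern about $\dim Y_i < n$ is likewise absorbed by this device, since the inequality $n = \dim Y \le \dim X$ only helps in the final estimate.
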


\begin{proof}
Assume that $D$ has $\mu$-EF-growth with respect to $\mathscr L$.
Let $V_1,\dots,V_\ell$; $\mathcal B_1,\dots,\mathcal B_\ell$;
$\lambda_{\mathcal B_1},\dots,\lambda_{\mathcal B_\ell}$; and $\lambda_D$
be as in condition (iv) of Proposition \ref{prop_mu_ef_growth_equivs}.

Let $\{s_1,\dots,s_q\}$ be the elements of $\bigcup_i\mathcal B_i$.
These sections determine a morphism $\Phi\colon X\to\mathbb P^{q-1}$.
Let $Y$ be the image.  Since the $s_i$ are nonzero sections, $Y$ is not
contained in any coordinate hyperplane.  Also note that $\Phi$ need not be
a closed embedding; in fact, it may happen that $\dim Y<\dim X$.
However, it is true that $Y$ is geometrically integral.
Let $n=\dim Y$ and $\Delta=\deg Y$.

Fix a place $v$ of $k$.

For each $j=1,\dots,q$, choose a local Weil function $\lambda_{(s_j),v}$ for
the divisor $(s_j)$ at $v$.  Since each $(s_j)$ is effective, we may assume
that each $\lambda_{(s_j),v}$ is nonnegative.


We will apply (\ref{contact_ineq}) with
$\mathbf c=(\lambda_{(s_1),v}(x),\dots,\lambda_{(s_q),v}(x))$
for some $x\in X(\overline k_v)$.
By (\ref{eq_mu_ef_growth_iv}), there is an index $i$ such that
\begin{equation}\label{eq_ef_vs_b_2}
  \lambda_{\mathcal B_i,v}(x) \ge \mu\lambda_{D,v}(x) + O(1)\;,
\end{equation}
where the implicit constant does not depend on $x$ (or $i$).
Write $\mathcal B_i=\{s_{j_0},\dots,s_{j_n}\}$.  Then
\begin{equation}\label{eq_ef_vs_b_3}
  c_{j_0}+\dots+c_{j_n}
    = \lambda_{(s_{j_0}),v}(x)+\dots+\lambda_{(s_{j_n}),v}(x)
    = \lambda_{\mathcal B_i,v}(x) + O(1)\;,
\end{equation}
where the implicit constant does not depend on $x$.
Since there are only finitely many possible values for $i$, the constant may
also be taken independent of $i$.

Combining (\ref{contact_ineq}), (\ref{eq_ef_vs_b_3}), and (\ref{eq_ef_vs_b_2})
gives
\[
  \begin{split} S_Y(m,\mathbf c)
    &\ge \frac{mH_Y(m)}{(n+1)(1+O(1/m))}\mu\lambda_{D,v}(x) + O(1) \\
    &= \nu m\lambda_{D,v}(x) + O(1)\;,
  \end{split}
\]
where again the implicit constant does not depend on $x$, and
$$\nu = \frac{\mu H_Y(m)}{(n+1)(1+O(1/m))}\;.$$
Let $V\subseteq H^0(X,\mathscr L^n)$ be the pull-back of $H^0(Y,\mathscr O(m))$.
Then $\dim V=H_Y(m)$.  Since $\dim X\ge n$, we have
\[
  \frac{\dim V}{\nu} = \frac{(n+1)(1+O(1/m))}{\mu}
    \le \frac{\dim X+1}{\mu}\left(1+O\left(\frac1m\right)\right)\;.
\]
Thus (\ref{eq_b_vs_ef_nu_bound}) holds for sufficiently large $m$.

On the other hand, by the definition of $S_Y(m,\mathbf c)$ and
our choice of $\mathbf c$, there are bases $\mathcal B_1,\dots,\mathcal B_r$
of $V$ and corresponding local Weil functions
$\lambda_{\mathcal B_1,v},\dots,\lambda_{\mathcal B_r,v}$ such that
\[
  S_Y(m,\mathbf c) = \max_{1\le i\le r}\lambda_{(\mathcal B_i),v}(\Phi(x))
\]
for all $x\in X(\overline k_v)$.
Thus, after pulling the bases back to $V$ and the local Weil functions back
to $X$, we see that $S_Y(m,\mathbf c)$ equals the left-hand side of
(\ref{eq_mu_b_growth_iv}), and hence $mD$ has $\nu$-b-growth
with respect to $V$ and $\mathscr L^m$.
\end{proof}

This proposition then leads quickly to the proof of the Main Theorem.

\begin{proof}[Proof of the Main Theorem]
We may assume that $\NevEF(\mathscr L,D)<\infty$
(otherwise there is nothing to prove).

Let $\epsilon>0$.  By the definition of $\NevEF$, there is a pair $(N,\mu)$
with $N\in\mathbb Z_{>0}$ and $\mu\in\mathbb Q_{>0}$, such that
$ND$ has $\mu$-EF-growth with respect to $\mathscr L^N$ and such that
$$\frac{\dim X + 1}{\mu} < \NevEF(\mathscr L,D) + \epsilon\;.$$
By Proposition \ref{prop_b_vs_ef_growth}, there exist a positive integer $m$,
a rational number $\nu$, and a linear subspace
$V\subseteq H^0(X,\mathscr L^{mN})$ such that $mND$ has $\nu$-b-growth
with respect to $V$ and $\mathscr L^{mN}$, and such that
$$\frac{\dim V}{\nu} \le \frac{\dim X + 1}{\mu} + \epsilon\;.$$
We then have
$$\Nevbir(\mathscr L,D) \le \frac{\dim V}{\nu}
  < \NevEF(\mathscr L,D) + 2\epsilon\;,$$
and the proof concludes by letting $\epsilon$ go to zero.
\end{proof}

\section{Some applications}\label{apps}

In this section, we recover some known results by computing
$\NevEF(\mathscr L,D)$ and then applying Theorem \ref{ef_thmc}.

Let $X$ be a projective variety of dimension $n$  over a number field $k$ (the analytic case will be similar).  Let $D_1, \dots, D_q$ be effective Cartier divisors on $X$.
In this section, such divisors will be said to be in \textbf{general position}
if for all $I\subseteq\{1,\dots,q\}$, every irreducible component of
$\bigcap_{i\in I} \Supp D_i$ has codimension $|I|$.

\noindent{\bf Application 1}.  We assume that $D_1, \dots, D_q$ are in
general position on $X$. In addition,  we assume that
there exist an ample divisor $A$ on $X$ and positive integers $d_1,\dots,d_q$
such that $D_i$ is linearly equivalent to $d_i A$ for all $i=1, \dots, q$.
By replacing $D_i$ with $(d/d_i)D_i$,
where $d=\lcm\{d_1, \dots, d_q\}$, we may assume that $d_i=1$ for all $i$.
Let $D=D_1+ \cdots +D_q$.
To compute $\NevEF(A, D)$, we take $N$ such that $NA$ is very ample,
so there is a morphism $\phi\colon X\rightarrow {\mathbb P}^m$ and hyperplanes
$H_1, \dots, H_q$ in $\mathbb P^m$ such that $\phi^*H_i=ND_i$ for all $i$.
Since $D_1, \dots, D_q$  are in general position on $X$,
for each point $P\in X$ there are at most $n=\dim X$ divisors
among $\{D_1, \dots, D_q\}$ passing through $P$,
so one may choose distinct $i_0,\dots,i_n\in\{1,\dots,q\}$ such that
$D_{i_0},\dots,D_{i_n}$ includes all of the $D_j$ passing through $P$.
  Then $\phi^*H_{j_i}, i=0, \dots, n,$  (regarding $H$ as a section $H\in H^0({\mathbb P}^m, \mathscr O_{{\mathbb P}^m}(1))$)
   forms a basis of a  subspace $V\subset H^0(X, NA)$ with $\dim V=n+1$
(or, equivalently, the $\mathbb Q$-divisors $\frac{1}{N} \phi^*H_{j_i}$,
$i=0, \dots, n$, are actually integral divisors, and they define
a basis of a  subspace $V\subset H^0(X, A)$ with $\dim V=n+1$).
By the condition on general position, this subspace is base point free.
Furthermore, we have, for every irreducible component $E$ of $D$ with $P\in E$,
$$\frac{1}{\ord_E(D)} \sum_{i=0}^n \ord_E\left(\frac{1}{N} \phi^*H_{j_i}\right)
  \ge 1\;.$$
Hence, from the definition,
$$\NevEF(D)\leq {n+1}\;.$$
Thus we recover the following important theorem of Evertse and Ferretti.

\begin{theorem} [Evertse--Ferretti \cite{ef_festschrift}] \label{thm_ef}
Let $X$ be a projective variety over a number field $k$, and let
$D_1, \dots, D_q$ be effective Cartier divisors on $X$ in general position.
Let $S\subset M_k$ be a finite set of places.
Assume that there exist an ample divisor $A$ on $X$ and positive integers
$d_i$ such that $D_i$ is linearly equivalent to $d_i A$ for $i=1, \dots, q$.
Then, for every $\epsilon >0$,
$$\sum_{i=1}^q \frac{1}{d_i} m_S(x, D_i) \leq (\dim X+1+\epsilon) h_A(x)$$
holds for all $k$-rational points outside a proper Zariski closed
subset of $X$.
\end{theorem}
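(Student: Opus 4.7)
The plan is to follow the outline of Application~1 and reduce Theorem~\ref{thm_ef} to the Main Theorem plus Theorem~A, by bounding $\NevEF(\mathscr O(A),D)$ above by $\dim X+1$ for a suitable $\mathbb R$-Cartier rescaling $D$ of $\sum_i D_i$.

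First I would replace each $D_i$ by $D_i':=D_i/d_i$, viewed as an effective $\mathbb Q$-Cartier (hence $\mathbb R$-Cartier) divisor; then $D_i'\sim_{\mathbb Q}A$, $m_S(x,D_i')=(1/d_i)m_S(x,D_i)$, and general position is preserved. Set $D=\sum_i D_i'$ and $n=\dim X$. If $X$ is not geometrically integral, then $X(k)$ is not Zariski dense by Lemma~\ref{lemma_non_geom_irred} and there is nothing to prove; so assume $X$ is geometrically integral.

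The core step is to show $\NevEF(\mathscr O(A),D)\le n+1$. Choose $N\in\mathbb Z_{>0}$ divisible by $\lcm(d_1,\dots,d_q)$ and large enough that $NA$ is very ample, giving a closed embedding $\Phi\colon X\hookrightarrow\mathbb P^m$ with $\Phi^{*}\mathscr O(1)=\mathscr O_X(NA)$; for each $i$, since $ND_i'$ is an integral effective divisor linearly equivalent to $NA$, pick a hyperplane $H_i\subseteq\mathbb P^m$ with $\Phi^{*}H_i=ND_i'$. For each $P\in X$, general position lets me pick $j_0,\dots,j_n\in\{1,\dots,q\}$ such that $\{D_{j_0},\dots,D_{j_n}\}$ contains every $D_j$ through $P$. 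Set $\mathcal B=\{\Phi^{*}H_{j_0},\dots,\Phi^{*}H_{j_n}\}$ and let $V$ be its span. The common zero locus of $\mathcal B$ on $X$ is $\bigcap_i\Supp D_{j_i}=\emptyset$ by general position, so $V$ is base-point-free; applying the projective dimension theorem to $\Phi(X)$ and to the intersection of the corresponding hyperplanes in $\mathbb P^m$ then forces linear independence of the $\Phi^{*}H_{j_i}$, giving $\dim V=n+1$. For every irreducible component $E$ of $D$ through $P$, every $D_j$ whose support contains $E$ passes through $P$ and so lies in $\{D_{j_0},\dots,D_{j_n}\}$, hence
\[
\sum_{i=0}^n\ord_E(ND_{j_i}')\;\ge\;\sum_{j:\,E\subseteq\Supp D_j}\ord_E(ND_j')\;=\;\ord_E(ND)
\]
locally at $P$. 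Thus $(\mathcal B)\ge ND$ in a Zariski neighborhood of $P$, i.e.\ $D$ has $1$-EF-growth with respect to $\mathscr O(A)^N$, so $\NevEF(\mathscr O(A),D)\le(n+1)/1=n+1$.

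Finally, the Main Theorem gives $\Nevbir(\mathscr O(A),D)\le\NevEF(\mathscr O(A),D)\le n+1$, and Theorem~A applied with $\mathscr L=\mathscr O(A)$ yields $m_S(x,D)\le(n+1+\epsilon)h_A(x)$ outside a proper Zariski closed subset. Expanding $m_S(x,D)=\sum_i(1/d_i)m_S(x,D_i)$ gives the theorem. The only mildly nonroutine point is verifying $\dim V=n+1$, which reduces to the projective dimension theorem via the general-position hypothesis; everything else is local bookkeeping with orders of vanishing and a clean rescaling.
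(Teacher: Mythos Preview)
Your proposal is correct and follows essentially the same route as the paper's Application~1: reduce to $D_i\sim A$ (you divide by $d_i$ while the paper multiplies by $d/d_i$, which is equivalent), choose $N$ with $NA$ very ample, pull back hyperplanes to get $ND_i$, and at each point select $n+1$ of them containing all divisors through that point to exhibit $\mu$-EF-growth with $\mu=1$, hence $\NevEF(\mathscr O(A),D)\le n+1$. Your treatment is slightly more explicit than the paper's in two places---you invoke Lemma~\ref{lemma_non_geom_irred} for geometric integrality and you justify $\dim V=n+1$ via the projective dimension theorem rather than asserting it---and you apply the Main Theorem plus Theorem~A directly rather than going through Theorem~\ref{ef_thmc}, but these are cosmetic differences, not a different method.
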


\noindent{\bf Application 2}.  We now only assume that $D_1, \dots, D_q$ are
in general position on $X$. Let $A$ be an ample Cartier divisor on $X$.
Denote by $\epsilon_{D_j}(A)$ the Seshadri constant  of $D_j$ with respect to $A$, which is defined as
$$\epsilon_{D_j}(A)
  = \sup\{\gamma\in\mathbb R: \text{$A-\gamma D_j$ is nef}\}\;.$$
Noticing that the statement in (\ref{HLeqn}) involves $\epsilon$,
it suffices to prove the inequality
$\sum_{j=1}^q \sum_{v\in S} c_j \lambda_{D_j,v}(x) < (n+1+\epsilon)h_A(x)$
in place of (\ref{HLeqn}), with rational $c_j$ close to
$\epsilon_{D_j}(A)$ chosen such that $A - c_j D_j$ is ample for all $j$.
By passing to $\QQ$-Cartier divisors and replacing $D_j$ with $c_jD_j$
we can further assume that $c_j=1$ for all $j$.

In this case, we compute $\NevEF(A, D)$ for $D=D_1+ \cdots +D_q$.
Take $N$ such that $qNA$ is very ample, $ND_j$ is integral for all $j$,
and $N(A-D_j)$ is base point free for all $j$.
According to Heier and Levin (see \cite[\S3]{HL}), there is a morphism
$\phi: X\rightarrow {\mathbb P}^m$ and hyperplanes $H_1, \dots, H_q$
in $\mathbb P^m$ such that $\phi^*H_j\ge ND_j$ for all $j$
and $\phi^* H_1,\dots,\phi^* H_q$ are in general position.
Then, for every $P\in X$, one can find hyperplanes $H_{j_i}, i=0, \dots, n$,
with $\{i_0,\dots, i_n\}\subset \{1, \dots, q\}$ such that
the collection includes all $\phi^{*}H_j$ passing through $P$,
$\phi^*H_{j_i}\ge ND_{j_i}$ for all $i$, and $\phi^*H_{j_i}, i=1, \dots, n$,
are in general position.  Hence $\phi^*H_{j_i}, i=0, \dots, n$, forms
a basis of a base-point free subspace $V\subset H^0(X, NA)$ with $\dim V=n+1$.
Furthermore,  for every irreducible component $E$ of $D$ with $P\in E$, we have
$$\frac{1}{\ord_E(D)} \sum_{i=0}^n \ord_E\left(\frac{1}{N}\phi^*H_{j_i}\right) \ge 1.$$
Thus
$$\NevEF(A, D)\leq n+1.$$

Then, by Theorem \ref{ef_thmc}, we recover the following theorem.

\begin{theorem}[Heier--Levin \cite{HL}]\label{HL}
Let $X$ be a projective variety of dimension $n$ defined over a number field $k$.  Let $D_1, \dots, D_q$ be effective Cartier divisors on $X$ in general position.
Let $S$ be a finite set of places of $k$.
   Let $A$ be an ample Cartier divisor on $X$. Then, for  $\epsilon > 0$,
 there exists a proper Zariski-closed subset $Z\subset X$ such that for all points $x\in X(k)\setminus Z$,
\begin{equation}\label{HLeqn}
\sum_{j=1}^q \ \epsilon_{D_{j}}(A) m_S(x,  D_{j})
  < (n+1+\epsilon)h_A(x),
\end{equation}
where, for all $j$, $\epsilon_{D_{j}}(A)$ is the Seshadri constant of $D_{j}$
with respect to $A$.
\end{theorem}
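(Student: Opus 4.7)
The plan is to reduce Theorem \ref{HL} to an upper bound on $\NevEF(A,D)$ and then invoke Theorem \ref{ef_thmc}. Since $\epsilon$ appears on the right-hand side of (\ref{HLeqn}), I would first replace each Seshadri constant $\epsilon_{D_j}(A)$ by a slightly smaller positive rational $c_j$ such that $A - c_j D_j$ is still ample, and aim to prove
$$\sum_{j=1}^q c_j \, m_S(x,D_j) < (n+1+\epsilon') h_A(x)$$
off a proper Zariski-closed subset for suitable $\epsilon' > 0$. Passing to $\mathbb{Q}$-Cartier divisors and using the scaling property in Remark \ref{remk_EF_D_linear}, I would absorb the $c_j$ into $D_j$ and henceforth assume $c_j = 1$, i.e., $A - D_j$ is ample for every $j$.

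Next I would bound $\NevEF(A,D)$ from above, where $D = D_1 + \cdots + D_q$. Choose $N$ large enough that $NA$ is very ample, each $ND_j$ is an integral Cartier divisor, and each $N(A - D_j)$ is base-point-free. A construction from \cite{HL}, built from suitable sections of $\mathscr{O}(N(A - D_j))$, then produces a morphism $\phi: X \to \mathbb{P}^m$ and hyperplanes $H_1, \ldots, H_q \subset \mathbb{P}^m$ such that $\phi^* H_j \geq N D_j$ as effective Cartier divisors on $X$ and such that the pullbacks $\phi^* H_j$ lie in general position on $X$.

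At any point $P \in X$, the general-position hypothesis forces at most $n$ of the $\phi^* H_j$ to pass through $P$, so I can pick indices $j_0, \ldots, j_n$ such that $\{\phi^*H_{j_0}, \ldots, \phi^* H_{j_n}\}$ contains every $\phi^* H_j$ through $P$ and is itself in general position. The sections $H_{j_0}, \ldots, H_{j_n}$ (viewed as sections of $\mathscr{O}_{\mathbb{P}^m}(1)$) pull back to a basis $\mathcal{B}$ of a base-point-free subspace $V \subseteq H^0(X, NA)$ of dimension $n+1$. For each irreducible component $E$ of $D$ containing $P$, a direct check using $\phi^* H_j \geq N D_j$ gives $\sum_{i=0}^n \ord_E\bigl(\tfrac{1}{N}\phi^* H_{j_i}\bigr) \geq \ord_E(D)$, which verifies the $\mu$-EF-growth condition with $\mu = 1$ (no blow-up needed, so one may take $Y = X$ in Definition \ref{def_mu_ef_growth}). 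Thus $\NevEF(A, D) \leq n+1$.

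Applying Theorem \ref{ef_thmc} with $\mathscr{L} = \mathscr{O}(A)$ then yields $m_S(x,D) \leq (n+1+\epsilon')h_A(x)$ off a proper Zariski-closed subset, which unwinds the reductions of the first paragraph to produce (\ref{HLeqn}). I expect the main technical obstacle to be the Heier--Levin geometric construction producing hyperplane pullbacks in general position: ampleness of each $A - D_j$ is essential there, providing enough additional global sections to deform the base divisors into the required general-position arrangement while still dominating $N D_j$.
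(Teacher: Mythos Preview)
Your proposal is correct and matches the paper's own argument essentially step for step: the same reduction to rational $c_j$ with $A-c_jD_j$ ample, the same normalization $c_j=1$, the same choice of $N$, the same appeal to the Heier--Levin construction producing hyperplane pullbacks $\phi^*H_j\ge ND_j$ in general position, the same local basis selection at each $P$, and the same conclusion $\NevEF(A,D)\le n+1$ followed by Theorem~\ref{ef_thmc}. The only cosmetic difference is that the paper takes $qNA$ very ample rather than $NA$, which is immaterial.
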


This also works for subschemes in general position; see \cite{HL}.

\noindent{\bf Application 3}.
We only assume that $D_1, \dots, D_q$ are in $l$-subgeneral position on $X$.

Recall the definition of $l$-subgeneral position:
Let $V$ be a projective variety
and $X\subset V$ be an irreducible subvariety of dimension $n$.
Cartier divisors $D_1,\dots,D_q$ on $V$ are said to be in
$l$-subgeneral position on $X$ if for every choice $J\subset \{1,\dots,q\}$
with $\# J\leq l+1$,
$$\dim \left((\bigcap _{j\in J} \Supp D_j)\cap X\right) \leq l-\#J.$$

The important tool to deal with $l$-subgeneral position is the following result,
which is originally due to Quang (see Lemma 3.1 in \cite{Quang19}).
\begin{lemma}[\cite{Quang19}]\label{quang}
Let $k$ be a number field. Let $X\subset \PP^M_k$ be a projective variety
of dimension $n$. Let $H_1,\dots, H_{l+1}$ be hyperplanes in $\PP^M_k$
which are in $l$-subgeneral position on $X$ with $l\ge n$.
Let $L_1,\dots,L_{l+1}$ be the normalized linear forms defining
$H_1,\dots,H_{l+1}$ respectively. Then there exist linear forms
$L_1',\dots,L_{n+1}'$ on $\PP^M_k$ such that

(a) $L_1' = L_1$.

(b) For every $t\in \{2,\dots,n+1\}$,
$L'_t \in \operatorname{span}_k(L_2,\dots,L_{l-n+t})$;
i.e., $L'_t$ is a $k$-linear combination of $L_2,\dots, L_{l-n+t}$.

(c) The hyperplanes defined by $L_1',\dots,L_{n+1}'$
are in general position on $X$.
\end{lemma}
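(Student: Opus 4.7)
The natural approach is an inductive construction of $L_1',\dots,L_{n+1}'$. Set $L_1':=L_1$ for (a). Suppose $L_1',\dots,L_{t-1}'$ ($2\le t\le n+1$) have been chosen satisfying (b) and with the property that, for every subset $J'\subseteq\{1,\dots,t-1\}$, every irreducible component of $X\cap\bigcap_{j\in J'}H_j'$ has codimension $|J'|$ in $X$ (taking $J'=\emptyset$ to mean the component $Y=X$). I will produce $L_t'\in\operatorname{span}_k(L_2,\dots,L_{l-n+t})$ so that after adjoining $H_t'$ the same general-position property persists on $\{1,\dots,t\}$.

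The step reduces to picking $L_t'$ that does not vanish identically on any such component $Y$. For each $Y$ let
\[
W_Y:=\{\,L\in\operatorname{span}_k(L_2,\dots,L_{l-n+t})\ :\ L|_Y\equiv 0\,\}\,.
\]
If every $W_Y$ is a proper subspace, then because $k$ is infinite the finite union $\bigcup_Y W_Y$ cannot exhaust the ambient span, so a valid (in particular nonzero) $L_t'$ exists. Granted this, each irreducible component of $Y\cap H_t'$ has codimension one in $Y$, hence codimension $|J'|+1$ in $X$, which is exactly what is needed to extend general position to $\{1,\dots,t\}$.

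The crucial point, and the only real obstacle, is to show that for each $Y$ at least one of $L_2,\dots,L_{l-n+t}$ is not identically zero on $Y$. I would argue by contradiction, splitting on whether $1\in J'$. If $1\in J'$, then $Y\subseteq H_1$, so the assumption forces $Y\subseteq X\cap H_1\cap H_2\cap\dots\cap H_{l-n+t}$; since $l-n+t\le l+1$, $l$-subgeneral position bounds this intersection's dimension by $l-(l-n+t)=n-t$, contradicting $\dim Y=n-|J'|\ge n-t+1$. If $1\notin J'$, then $J'\subseteq\{2,\dots,t-1\}$ forces $|J'|\le t-2$, and the assumption places $Y$ inside $X\cap H_2\cap\dots\cap H_{l-n+t}$, whose dimension is at most $l-(l-n+t-1)=n-t+1$ by $l$-subgeneral position; this contradicts $\dim Y=n-|J'|\ge n-t+2$.

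After the induction reaches $t=n+1$, conditions (a) and (b) hold by construction, and (c) is the accumulated codimension statement: the intersection of all $n+1$ hyperplanes $H_j'$ with $X$ then has codimension $n+1>\dim X$ and is necessarily empty, confirming general position on $X$.
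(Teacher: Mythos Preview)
The paper does not give its own proof of this lemma; it is merely quoted from \cite{Quang19}. Your inductive construction is the standard one and is correct. The dimension count in both cases of your contradiction argument checks out: in Case 1 the $l$-subgeneral position bound applies to the $l-n+t$ hyperplanes $H_1,\dots,H_{l-n+t}$ (note $l-n+t\le l+1$ since $t\le n+1$), and in Case 2 to the $l-n+t-1$ hyperplanes $H_2,\dots,H_{l-n+t}$; in each case the resulting upper bound on $\dim Y$ is strictly smaller than the inductive value $n-|J'|$. The avoidance of the finite union of proper subspaces $W_Y$ over an infinite field is standard, and taking $J'=\emptyset$ among the components ensures in particular that the chosen $L_t'$ is nonzero.

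One tacit hypothesis you are using (and which the lemma needs for its conclusion even to be possible) is that $X\not\subseteq H_1$, so that the base case ``every component of $X\cap H_1'$ has codimension $1$'' holds. When $l>n$ the $l$-subgeneral position inequality alone does not force this, but it is implicit in the setting since the $H_i$ are meant to restrict to effective Cartier divisors on $X$. With that understood, your argument is complete and matches the argument in Quang's paper.
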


Using this lemma, similar to the second case, we can prove that,
under the assumptions that $\epsilon_{D_j}(A)=1$ for $j=1, \dots, q$
and that $D_1, \dots, D_q$  are in $l$-subgeneral position on $X$,
 $$\NevEF(A, D)\leq (l-n+1)(n+1).$$
Thus, we have

\begin{theorem} [He--Ru \cite{HR}]
Let $k$ be a number field and let $S$ be a finite set of places of $k$.
Let $X$ be a projective variety of dimension $n$ over $k$.
Let $D_1, \dots, D_q$ be effective divisors on $X$ in  $l$-subgeneral position with $l\ge n$.
Let $A$ be an ample Cartier divisor on $X$.
Then, for all $\epsilon > 0$,
there is a proper Zariski-closed subset $Z\subset X$ such that the inequality
$$\sum_{j=1}^q  \epsilon_{D_{j}}(A) m_S(x,  D_{j})
  < [(l-n+1)(n+1)+\epsilon]h_A(x)$$
holds for all points $x\in X(k)\setminus Z$,
\end{theorem}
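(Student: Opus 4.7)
The plan is to mimic Applications 1 and 2: reduce the desired Weil inequality to the bound $\NevEF(A,D) \le (l-n+1)(n+1)$ for $D = D_1 + \cdots + D_q$, and then invoke Theorem \ref{ef_thmc} (with $\mathscr L = \mathscr O(A)$, via the Main Theorem combined with Theorem A). As in Application 2, since $\epsilon$ appears on the right, it suffices to prove the bound after replacing each $\epsilon_{D_j}(A)$ by a positive rational $c_j$ strictly less than but arbitrarily close to it, so that $A - c_j D_j$ is ample; rescaling $D_j \mapsto c_j D_j$ via Remark \ref{remk_EF_D_linear} then reduces to the normalized case $\epsilon_{D_j}(A) = 1$. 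Pick $N \in \mathbb Z_{>0}$ with $qNA$ very ample, $ND_j$ integral, and $N(A-D_j)$ base point free for each $j$. The Heier--Levin construction yields a morphism $\phi \colon X \to \PP^m$ together with hyperplanes $H_1,\ldots,H_q \subset \PP^m$ such that $\phi^{*}H_j \ge ND_j$ for all $j$; with a sufficiently generic choice of sections, $\phi^{*}H_1,\ldots,\phi^{*}H_q$ inherit the $l$-subgeneral position of $D_1,\ldots,D_q$ on $\phi(X)$. Let $L_j$ be a linear form defining $H_j$.

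The key numerical input is that, for every irreducible component $E$ of $\Supp D$, at most $l-n+1$ of the $D_j$ contain $E$: if $E \subseteq \bigcap_{j \in I} D_j$ with $|I| \le l+1$, then $\dim E = n-1$ together with the bound $\dim\bigl((\bigcap_{j\in I}\Supp D_j)\cap X\bigr) \le l-|I|$ forces $|I| \le l-n+1$. Consequently $\ord_E(D) = \sum_{j\in I_E}\ord_E(D_j) \le (l-n+1)\max_{j\in I_E}\ord_E(D_j)$, where $I_E$ is the set of $j$'s with $E \subseteq D_j$. Pick $j_1 \in I_E$ attaining this maximum, pad $I_E \cup \{j_1\}$ to an ordered tuple $(j_1,\ldots,j_{l+1})$ of distinct indices (with $j_1$ first), and apply Lemma \ref{quang} to $L_{j_1},\ldots,L_{j_{l+1}}$ (which are in $l$-subgeneral position on $\phi(X)$ as a sub-collection). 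The lemma produces linear forms $L'_1,\ldots,L'_{n+1}$ defining hyperplanes in general position on $\phi(X)$, with $L'_1 = L_{j_1}$. The pulled-back sections then span a base-point-free subspace $V_E \subseteq H^0(X, NA)$ of dimension $n+1$ with basis $\mathcal B_E$, and $\phi^{*}L'_1 = \phi^{*}L_{j_1} \ge ND_{j_1}$ alone supplies
\[
  \ord_E(\mathcal B_E)\;\ge\;\ord_E(\phi^{*}L'_1)\;\ge\;N\ord_E(D_{j_1})\;\ge\;\frac{N}{l-n+1}\ord_E(D).
\]

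As $E$ runs through the finitely many components of $\Supp D$, we obtain a finite collection $\{\mathcal B_E\}$ whose join in the group of Cartier b-divisors satisfies $\bigvee_E(\mathcal B_E) \ge \tfrac{N}{l-n+1}D$. Proposition \ref{prop_mu_ef_growth_equivs} ((ii)$\Rightarrow$(i)) converts this into $\mu$-EF-growth of $ND$ with respect to $\mathscr O(NA)$ for $\mu = 1/(l-n+1)$, yielding $\NevEF(A,D) \le (n+1)(l-n+1)$. Theorem \ref{ef_thmc}, together with the reduction of the first paragraph, then concludes the proof. The main obstacle is the Heier--Levin step: one must choose the defining sections generically enough to preserve $l$-subgeneral position on $\phi(X)$, since $\phi^{*}H_j \ge ND_j$ is in general a strict inequality. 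Granted that, using a separate basis $\mathcal B_E$ per component $E$ is precisely what the multi-basis form (ii) of Proposition \ref{prop_mu_ef_growth_equivs} is designed to accommodate: distinct components $E$ through a common point $P$ might otherwise demand incompatible orderings in Quang's lemma.
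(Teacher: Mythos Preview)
Your reductions (to $c_j=1$, to the Heier--Levin morphism $\phi$) and your ``key numerical input'' that at most $l-n+1$ of the $D_j$ contain a fixed prime component $E$ are all correct and match the paper. The gap is in the final step: the per-component inequality $\ord_E(\mathcal B_E)\ge\frac{N}{l-n+1}\ord_E(D)$ does \emph{not} yield condition~(ii) of Proposition~\ref{prop_mu_ef_growth_equivs}. By (iii), condition~(ii) is equivalent to $\max_E\lambda_{(\mathcal B_E)}\ge\frac{N}{l-n+1}\lambda_D-O(1)$ globally. At a point $x$ lying near two distinct components $E_1,E_2$ of $\Supp D$, the right-hand side is essentially the \emph{sum} $\frac{N}{l-n+1}\bigl(\ord_{E_1}(D)\lambda_{E_1}(x)+\ord_{E_2}(D)\lambda_{E_2}(x)\bigr)$, while your bound only guarantees that $\lambda_{(\mathcal B_{E_i})}(x)$ dominates the $i$-th summand; a maximum of two quantities need not dominate their sum. (Locally, with $D=(x)+(y)$ and each $\mathcal B_{E_i}$ contributing only along $E_i$, the inequality fails near the origin.) You in fact anticipated the danger in your last sentence, but the multi-basis form (ii) still, in the end, demands one basis per \emph{point}, not per component.

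The paper's sketch ``similar to the second case'' is meant pointwise. For each $v$ and each $x\in X(\overline k_v)$, reorder all $q$ indices so that $\lambda_{\phi^{*}H_{j_1},v}(x)\ge\lambda_{\phi^{*}H_{j_2},v}(x)\ge\cdots$, and apply Lemma~\ref{quang} to $L_{j_1},\dots,L_{j_{l+1}}$ with \emph{this} ordering. Property~(b) then gives $\lambda_{\phi^{*}L'_t,v}(x)\ge\lambda_{\phi^{*}H_{j_{l-n+t}},v}(x)$ for $t\ge2$; combined with $L'_1=L_{j_1}$ and the monotonicity $(l-n+1)\lambda_{\phi^{*}H_{j_1},v}(x)\ge\sum_{r=1}^{l-n+1}\lambda_{\phi^{*}H_{j_r},v}(x)$, one obtains
\[
(l-n+1)\sum_{t=1}^{n+1}\lambda_{\phi^{*}L'_t,v}(x)
\;\ge\;\sum_{r=1}^{l+1}\lambda_{\phi^{*}H_{j_r},v}(x)
\;\ge\;N\lambda_{D,v}(x)-O(1)\;.
\]
Only finitely many orderings occur, so this verifies Proposition~\ref{prop_mu_ef_growth_equivs}(iv) and hence $\NevEF(A,D)\le(l-n+1)(n+1)$. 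The point is that the ordering is chosen \emph{at} $x$, so one basis handles every component through $x$ simultaneously; your per-component choice of $j_1$ cannot do this.
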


This also works for subschemes in $l$-subgeneral position, by blowing them up;
see \cite{HR}.

\section{An Example of Faltings}\label{falt_sect}

In this section we use the notion $\NevEF(D)$ to recover the proof
of a class of examples of Faltings that appeared in his \emph{Baker's Garden}
article \cite{faltings}.
Recall that these examples consist of irreducible divisors $D$
on $\mathbb P^2$ for which $\mathbb P^2\setminus D$ has only finitely many
integral points over $\mathscr O_{k,S}$, where $\mathscr O_{k,S}$
is the localization of the ring of integers of a number field $k$
away from a finite set $S$ of places of $k$.

Note that none of the applications in Section \ref{apps} used the
birational model in Definition \ref{def_nevef}.  In this section, however,
this model is essential.

The main result, Theorem \ref{falt_dioph}, covers all of Faltings' examples,
yet its proof follows rather directly from Corollary \ref{cor_nev_ef_int_pts}.
Since the latter corollary relies
on Schmidt's Subspace Theorem, it necessarily involves varieties that can
be embedded into semiabelian varieties (actually $\mathbb G_{\mathrm m}^N$).
Thus, Faltings' examples can be viewed as examples where one adds components
to the divisor $D$ to obtain a divisor whose complement can be embedded
into a semiabelian variety, in such a way that
the resulting diophantine inequality
is strong enough to give a useful inequality for the original divisor $D$.

In particular, the Shafarevich conjecture (on semistable abelian varieties
over a given number field with good reduction outside of a given finite set
of places, proved by Faltings in 1983) stands out as presently the only
diophantine result with all the hallmarks of a result proved by Thue's method
(ineffective, but with bounds on the number of counterexamples), but which
has not been proved by Thue's method.  This theorem amounts to showing
finiteness of integral points on $\mathcal A_{g,n}$, which also has no
embedding into a semiabelian variety.  It would be interesting
to know if some variant of the above approach could be used to derive
finiteness of integral points on $\mathcal A_{g,n}$ (and therefore the
Shafarevich conjecture) by methods that ultimately rely on Thue's method.

In this section, we give a revised proof of Faltings' result.  We will
split Faltings' main result into a geometric part and an arithmetic part.
The geometric part (Theorem \ref{falt_constr}) guarantees that examples
with certain properties exist, while
the arithmetic part (Theorem \ref{falt_dioph}) says that in each such example
$\mathbb P^2\setminus D$ has only finitely many
integral points over $\mathscr O_{k,S}$.
We prove only the arithmetic part here, since that is the part that
involves the Evertse--Ferretti Nevanlinna constant.

The first (geometric) part of Faltings' result is stated as follows.

\begin{theorem}[Faltings]\label{falt_constr}
Let $k$ be a field of characteristic zero, and let $X$ be a smooth
geometrically irreducible algebraic surface over $k$.
Then, for all sufficiently positive line sheaves $\mathscr L$ on $X$,
there exists a morphism $f\colon X\to\mathbb P^2$ that satisfies
the following conditions.
\begin{enumerate}
\item  $f^{*}\mathscr O(1)\cong\mathscr L$.
\item  The ramification locus $Z$ of $f$ is smooth and irreducible,
and the ramification index is $2$.
\item  The restriction of $f$ to $Z$ is birational onto its
image $D\subseteq\mathbb P^2$.
\item  $D$ is nonsingular except for cusps and simple double points.
\item  Let $Y\to X\to\mathbb P^2$ denote the Galois closure of
$X\to\mathbb P^2$ (i.e., the normalization of $X$ in the Galois closure
of $K(X)$ over $K(\mathbb P^2)$).  Also let $n=\deg f$.  Then $Y$ is
smooth and its Galois group over $\mathbb P^2$ is the full symmetric group
$S_n$.
\item  The ramification locus of $Y$ over $\mathbb P^2$ is the sum of
distinct conjugate effective divisors $Z_{ij}$, $1\le i<j\le n$.  They have
smooth supports, and are disjoint with the following two exceptions.
Points of $Y$ lying over double points of $D$ are fixed points of a subgroup
$S_2\times S_2$ of $S_n$, and they lie
on $Z_{ij}\cap Z_{\ell m}$ with distinct indices $i,j,\ell,m$.
Points of $Y$ lying over cusps of $D$ are fixed points of a subgroup
$S_3$ of $S_n$, and lie on
$Z_{ij}\cap Z_{i\ell}\cap Z_{j\ell}$.
\end{enumerate}
\end{theorem}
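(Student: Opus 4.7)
The plan is to realize $f\colon X\to\mathbb P^2$ as a generic projection associated to $\mathscr L$. Once $\mathscr L$ is sufficiently positive (certainly ample with suitable tensor powers separating jets of high order), I would pick a generic three-dimensional subspace $V\subseteq H^0(X,\mathscr L)$ without base points, producing $f\colon X\to\mathbb P(V^\vee)\cong\mathbb P^2$. This gives (i) automatically, and a parameter count shows that $f$ is finite of some degree $n$ for generic $V$.

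Claims (ii)--(iv) then amount to the classical theory of branch curves of generic projections of smooth surfaces to $\mathbb P^2$. My approach is to identify $Z$ as the zero scheme of the Jacobian section $\wedge^2 df\in H^0(X,\Omega_X^2\otimes f^{*}T_{\mathbb P^2})$ and then apply Bertini-type transversality in the parameter space of three-planes in $H^0(X,\mathscr L)$ to conclude that $Z$ is smooth and irreducible and that $f$ ramifies to order exactly $2$ along $Z$. For (iii)--(iv) I would use the local normal forms of stable maps of smooth surfaces to $\mathbb P^2$: at a generic point of $Z$ the map $f$ is a Whitney fold $(x,y)\mapsto(x,y^2)$; at finitely many pairs of points of $Z$ two folds collide to produce ordinary double points of $D$; and at finitely many points of $Z$ the differential of $f|_Z$ vanishes in the simplest way to produce Whitney cusps with model $(x,y)\mapsto(x,y^3+xy)$. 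All higher Thom--Boardman strata and higher-order multi-point incidences have negative expected dimension, so generic $V$ eliminates them provided $\mathscr L$ is positive enough for the required jet transversality.

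For (v), I would study the monodromy representation $\rho\colon\pi_1^{\mathrm{et}}(\mathbb P^2\setminus D)\to S_n$ associated to $f$. Its image is transitive because $X$ is geometrically integral, and (ii) implies it contains a simple transposition (the local monodromy around the smooth locus of $D$); a Jordan-type argument then gives surjectivity onto $S_n$ provided the image is primitive, which is a further Bertini-type condition for sufficiently positive $\mathscr L$. The Galois cover $Y$ corresponding to $\rho$ is smooth: the only subtle fibers are over nodes and cusps of $D$, and a direct \'etale-local calculation at a point of $Y$ above such a singular point of $D$ (where the inertia group is $S_2\times S_2$ over a node and $S_3$ over a cusp) shows smoothness and identifies the inertia. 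Part (vi) then follows because each $Z_{ij}\subset Y$ is the fixed locus of the transposition $(ij)\in S_n$, hence smooth, and the local decomposition groups above nodes (resp.\ cusps) force exactly the asserted transverse incidence pattern of two (resp.\ three) of the divisors $Z_{ij}$.

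The main obstacle will be the transversality arguments underpinning (iv) and their coordination with the local structure in (vi): one must choose $V$ generically enough that every singular point of $D$ is forced to be a node or a simple cusp, and simultaneously that above each such point $Y$ has the predicted $S_2\times S_2$ or $S_3$ inertia with smooth transverse $Z_{ij}$. This is a standard but intricate jet-transversality computation in the parameter space of three-dimensional subspaces of $H^0(X,\mathscr L)$, and it is this step that dictates the precise positivity hypothesis needed on $\mathscr L$.
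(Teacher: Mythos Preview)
The paper does not prove this theorem: immediately after stating it, the authors write ``For a proof of this theorem, and also an explicit description of the `sufficiently positive' condition on $\mathscr L$, see Faltings' paper \cite{faltings},'' and they explicitly say that they prove only the arithmetic part (Theorem~\ref{falt_dioph}). So there is no in-paper argument to compare your proposal against.

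That said, your sketch is broadly the right shape for how this theorem is actually established---generic projection to $\mathbb P^2$ via a generic three-dimensional linear system in $\mathscr L$, classical branch-curve theory giving fold and cusp local models, and a monodromy argument producing the full symmetric group. A couple of points where your outline is thin: the primitivity step in (v) is not automatic from a single transposition plus transitivity (you need something like $2$-transitivity, or an argument via a generic pencil of lines in $\mathbb P^2$ whose monodromy is doubly transitive on a fiber), and the smoothness of $Y$ over cusps requires a genuinely local computation with the $S_3$-action on the normalization of the fiber product, not just identification of the inertia group. These are exactly the ``intricate jet-transversality'' and local-model computations you flag as the main obstacle, and they constitute the bulk of the work in Faltings' original argument; your proposal correctly locates them but does not carry them out.
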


For a proof of this theorem, and also an explicit description of the
``sufficiently positive'' condition on $\mathscr L$,
see Faltings' paper \cite{faltings}.

For convenience, write $Z_{ij}=Z_{ji}$ when $i,j\in\{1,\dots,n\}$ and $i>j$.
Let
$$A_i = \sum_{j\ne i} Z_{ij}
  \qquad\text{and}\qquad M = \sum A_i = \sum_{i\ne j} Z_{ij}\;.$$
Let $L$ be the divisor class of $\mathscr L$ on $X$, and let it also denote
the pull-back of this divisor class to $Y$.  In addition, let $d=\deg D$.
We then have
$$2\sum_{i<j}Z_{ij} = \sum A_i = M \sim dL\;.$$

Using the setup as above, we now use Evertse--Ferretti Nevanlinna constants
together with Corollary \ref{cor_nev_ef_int_pts} to prove
the other part of Faltings' result.

\begin{theorem}\label{falt_dioph}
Let $k$ be a number field and let $S$ be a finite set of places of $k$.
Let $Y$, $n$, $\{Z_{ij}\}_{i<j}$, $\{A_i\}_i$, and $M$ be as in
Theorem \ref{falt_constr} and the discussion following it.
Also let $\alpha$ be a rational number such that $M-\alpha A_i$ is an ample
$\mathbb Q$-divisor for all $i$.  Then:
\begin{enumerate}
\item[(a).]  if $\alpha>6$ then no set of $\mathscr O_{k,S}$-integral points
on $Y\setminus\bigcup Z_{ij}$ is Zariski-dense, and
\item[(b).]  if $\alpha>8$ then every set of $\mathscr O_{k,S}$-integral points
on $Y\setminus\bigcup Z_{ij}$ is finite.
\end{enumerate}
Since $Y\setminus\bigcup Z_{ij}$ is an \'etale cover of
$\mathbb P^2\setminus D$, the above conclusions also hold for
$\mathbb P^2\setminus D$ (see \cite[\S\,4.2]{serre} or
\cite[\S\,5.1]{vojta_lnm}).
\end{theorem}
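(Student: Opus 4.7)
The plan is to reduce both parts of the theorem to the finiteness criterion of Corollary \ref{cor_nev_ef_int_pts} by bounding the Evertse--Ferretti Nevanlinna constant $\NevEF(D)$ on $Y$ and, for part (b), the analogous constant $\NevEF(D|_C)$ on each curve $C$ appearing in the Zariski-closed set produced by part (a). The divisor $D=\sum_{i<j}Z_{ij}$ is ample on $Y$ because $2D\sim dL$ and $L=f^{*}\mathscr O(1)$ is ample (as the finite pullback of an ample divisor), so the corollary does apply.

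For part (a), the main estimate is $\NevEF(D)\le 6/\alpha$, which is $<1$ exactly when $\alpha>6$. I work first with the line sheaf $\mathscr L=\mathscr O(M)=\mathscr O(D)^{\otimes 2}$, where $M=\sum A_i$ is an honest integral divisor, and establish $\NevEF(\mathscr L,D)\le 3/\alpha$; the rescaling $\NevEF(\mathscr O(D),D)\le 2\NevEF(\mathscr O(M),D)$ (immediate from the definition, since any triple $(N',V',\mu')$ for $\mathscr L^{k}$ is also a triple for $\mathscr L$ with $N=kN'$ and $\mu=\mu'/k$) then gives the $6/\alpha$ bound. For the core estimate, I take the birational model $\phi\colon Y'\to Y$ obtained by blowing up the finitely many preimages in $Y$ of the double points and cusps of $D$, and for each $Q'\in Y'$ construct a base-point-free three-dimensional subspace $V\subseteq H^0(Y,\mathscr O(NM))$ with a basis $\mathcal B$ satisfying $\phi^{*}(\mathcal B)\ge\alpha N\phi^{*}D$ on a neighborhood of $Q'$. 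The ingredients are the subspaces
\[
  W_i=H^0\bigl(Y,\mathscr O(N(M-\alpha A_i))\bigr)\cdot\sigma_i\subseteq H^0(Y,\mathscr O(NM)),
\]
with $\sigma_i$ the canonical section of $\mathscr O(\alpha N A_i)$; these are nonzero for $N$ sufficiently divisible because $M-\alpha A_i$ is ample by hypothesis. The local analysis has three typical cases: at a smooth point of some $Z_{pq}$ take $s_0\in W_p,\,s_1\in W_q$; at the exceptional divisor over a double point $Z_{pq}\cap Z_{rs}$ take $s_0\in W_p,\,s_1\in W_r$; at the exceptional divisor over a cusp take $s_0\in W_p,\,s_1\in W_q$ for suitable indices; in each case a third section $s_2\in H^0(Y,\mathscr O(NM))$ is chosen generically so that $V=\operatorname{span}(s_0,s_1,s_2)$ is base-point-free on $Y$. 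Tracking that $\phi^{*}A_i$ contains the exceptional divisor over a double point with multiplicity $1$ and the one over a cusp with multiplicity $2$, while $\phi^{*}D$ contains them with multiplicities $2$ and $3$ respectively, the tightest constraint is produced at the exceptional divisor over a double point, where two heavy-vanishing sections together contribute $2\alpha NE$ against $\phi^{*}D=2E$, giving exactly $\mu=\alpha$.

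Part (b) follows from an analogous but simpler computation on each curve $C$ in the Zariski-closed subset from (a) that is not contained in $D$. Since $\dim C+1=2$, a basis fits only one heavy-vanishing section together with one generic one, and the worst case is now a point of $C$ lying over a double point of $D$ transversally to both branches: there $D|_C$ has multiplicity $2$ but the single heavy section contributes only $\alpha N$, forcing $\mu\le\alpha/2$. Combined with the same factor-of-$2$ rescaling this yields $\NevEF(D|_C)\le 8/\alpha$, which is $<1$ exactly when $\alpha>8$, so Corollary \ref{cor_nev_ef_int_pts} applied on $C$ gives finiteness of integral points on $C\setminus(C\cap D)$. The main technical obstacle in both parts is verifying global base-point-freeness of $V$ on $Y$ (respectively on $C$) when two of its generators vanish on common loci such as $A_p\cap A_q$ (in particular at cusps, where three of the $A_i$ share a common point); this forces the remaining generic section to be chosen to avoid those loci, and relies on the base-point-free properties of $H^0(Y,\mathscr O(NM))$ for $N$ large.
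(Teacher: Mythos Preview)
Your proposal is correct and follows essentially the same route as the paper. Both arguments reduce the theorem to Corollary~\ref{cor_nev_ef_int_pts} by showing $\NevEF(D)\le 6/\alpha$ on $Y$ and $\NevEF(D|_C)\le 8/\alpha$ on each curve $C$, and both build the required base-point-free subspaces from sections in the linear systems $\sigma_i^{\alpha N}\cdot H^0(Y,\mathscr O(N(M-\alpha A_i)))$, case-splitting according to whether the point in question lies on no $Z_{ij}$, on a single $Z_{ij}$, at an intersection $Z_{ij}\cap Z_{\ell m}$ with four distinct indices, or at a cusp $Z_{ij}\cap Z_{i\ell}\cap Z_{j\ell}$.

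The only differences are cosmetic. The paper packages the casework into two lemmas (Lemmas~\ref{lemma_2.5.3} and~\ref{lemma_2.5.4}) phrased in terms of hyperplane sections and Weil-function inequalities (Proposition~\ref{falt_main_prop}), and it sometimes draws the first section from the \emph{sum} $W_i+W_j$ rather than from a single $W_i$; you instead work directly with an explicit blowup model and divisor multiplicities. The paper itself remarks that the b-divisor/blowup phrasing is an equivalent alternative. Note, incidentally, that your blowup is not actually needed for part~(a): the inequalities $(s_0)+(s_1)+(s_2)\ge\alpha N D$ already hold as honest divisor inequalities on $Y$ in each case (e.g., at a cusp, $A_p+A_q\ge Z_{pq}+Z_{p\ell}+Z_{q\ell}$ locally), so one may take the model $\phi$ to be the identity. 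For part~(b) the same is true once one observes that on the curve $C$ the relevant restrictions $Z_{pq}|_C$ and $Z_{rs}|_C$ at a double point are supported at the same closed point.
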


The first part of the proof of this theorem is the following proposition,
which contains all of the geometry specific to the situation of
Theorem \ref{falt_constr}.

\begin{proposition}\label{falt_main_prop}
Let $k$ be a number field, and let $Y$, $n$, $\{Z_{ij}\}_{i<j}$,
$\{A_i\}_i$, $M$, and $\alpha$ be as in Theorem \ref{falt_dioph}.
Assume that $n\ge4$.  Fix Weil functions $\lambda_{ij}$ for each $Z_{ij}$.
Let $\beta$ be an integer such that $\beta\alpha\in\mathbb Z$ and such that
$\beta M$ and all $\beta(M-\alpha A_i)$ are very ample.
Fix an embedding $Y\hookrightarrow\mathbb P^N_k$ associated
to a complete linear system of $\beta M$, and regard $Y$ as a subvariety
of $\mathbb P^N_k$ via this embedding.  Then
\begin{enumerate}
\item[(a).]  There exist a finite list $H_1,\dots,H_q$ of hyperplanes in
$\mathbb P^N_k$, with associated Weil functions $\lambda_{H_j}$ for all $j$,
and constants $c_v$ for all $v\in M_k$, with the following property.
Let $\mathscr J$ be the collection of
all three-element subsets $J=\{j_0,j_1,j_2\}$ of $\{1,\dots,q\}$ for which
$Y\cap H_{j_0}\cap H_{j_1}\cap H_{j_2}=\emptyset$.
Then $\mathscr J\ne\emptyset$, and the inequality
\begin{equation}\label{eq_2.5.1}
  \max_{J\in\mathscr J}\sum_{j\in J} \lambda_{H_j}(y)
    \ge \beta\alpha \sum_{i<j}\lambda_{ij}(y) - c_v
\end{equation}
holds for all $v\in M_k$ and all $y\in Y(\overline k_v)$ not lying on the
support of any $Z_{ij}$ or on any of the $H_j$.
\item[(b).]  Let $C$ be an integral curve in $Y$, not contained in the support
of any $Z_{ij}$.  Then there exist a finite list $H_1,\dots,H_q$ of hyperplanes,
with associated Weil functions as before, and constants $c_v$
for all $v\in M_k$, with the following property.
Let $\mathscr J$ be the collection of all two-element subsets $J=\{j_0,j_1\}$
of $\{1,\dots,q\}$ for which $C\cap H_{j_0}\cap H_{j_1}=\emptyset$.
Then $\mathscr J\ne\emptyset$, and the inequality
\begin{equation}\label{eq_2.5.2}
  \max_{J\in\mathscr J}\sum_{j\in J} \lambda_{H_j}(y)
    \ge \frac{\beta\alpha}{2} \sum_{i<j}\lambda_{ij}(y) - c_v
\end{equation}
holds for all $v\in M_k$ and for all but finitely many $y\in C(\overline k_v)$.
\end{enumerate}
\end{proposition}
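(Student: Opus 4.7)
My plan is to construct, for each $i \in \{1, \dots, n\}$, a family of hyperplanes in $\mathbb{P}^N_k$ whose pullback to $Y$ dominates $\beta\alpha A_i$, and then to assemble suitable triples (for part (a)) or pairs (for part (b)) of such hyperplanes, adapted to the local geometry of the $Z_{ij}$'s at each point. Since $\beta(M-\alpha A_i)$ is very ample, the linear system $|\beta(M-\alpha A_i)|$ is base-point-free; multiplying its sections by the canonical section of $\mathscr{O}_Y(\beta\alpha A_i)$ embeds $H^0(Y,\mathscr{O}(\beta(M-\alpha A_i)))$ as a subspace $V_i \subseteq H^0(Y,\mathscr{O}(\beta M))$, every nonzero section of which has divisor at least $\beta\alpha A_i$ on $Y$. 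Under the complete-linear-system embedding $Y \hookrightarrow \mathbb{P}^N_k$ by $|\beta M|$, sections in $V_i$ correspond to hyperplanes whose intersection with $Y$ contains $\beta\alpha A_i$, while the ``moving part'' is an arbitrary divisor in the base-point-free system $|\beta(M-\alpha A_i)|$ and can be arranged to avoid any specified point of $Y \setminus \Supp A_i$.

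For part (a), I would use Theorem \ref{falt_constr}(vi) to stratify points $y_0 \in Y$ into four types according to which $Z_{ij}$ pass through them: (i) none, (ii) exactly one $Z_{ij}$, (iii) a double-point preimage lying on $Z_{ij}\cap Z_{\ell m}$ with $\{i,j\}\cap\{\ell,m\}=\emptyset$, and (iv) a cusp preimage lying on $Z_{ij}\cap Z_{i\ell}\cap Z_{j\ell}$. For each type I would choose the triple of hyperplanes as: (i) three generic hyperplanes in $|\beta M|$; (ii) one from $V_i$ plus two generic; (iii) one from $V_i$, one from $V_\ell$, and one generic; (iv) one from each of $V_i$, $V_j$, $V_\ell$. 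In case (iv) the three $V$-hyperplanes together cover $2\beta\alpha(Z_{ij}+Z_{i\ell}+Z_{j\ell})$ near $y_0$, more than the needed $\beta\alpha(Z_{ij}+Z_{i\ell}+Z_{j\ell})$; cases (ii) and (iii) cover exactly $\beta\alpha$ times the relevant sum, and case (i) is vacuous. The Weil-function inequality (\ref{eq_2.5.1}) then follows at every $y \in Y(\overline{k}_v)$ close to $y_0$ by the divisor-to-Weil-function translation in Proposition \ref{prop_mu_ef_growth_equivs}.

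For part (b), the same scheme works restricted to $C$, with pairs replacing triples. Because $C$ is not contained in any $Z_{ij}$, it meets the set of cusp and double-point preimages in only finitely many points, which are absorbed into the ``for all but finitely many'' clause; off this set every $y_0 \in C$ lies on at most one $Z_{ij}$. One hyperplane from $V_i$ together with one generic hyperplane then gives $\lambda_{H_{j_0}}(y) + \lambda_{H_{j_1}}(y) \ge \beta\alpha\,\lambda_{ij}(y) - c_v$, which dominates the required $\tfrac{\beta\alpha}{2}\lambda_{ij}(y) - c_v$. In both parts the finite list $H_1,\dots,H_q$ is assembled from finitely many sections of each $V_i$ together with finitely many generic sections of $H^0(Y,\mathscr{O}(\beta M))$, enough to realize every triple (resp.~pair) called for above.

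The main obstacle will be enforcing the emptiness condition $Y \cap H_{j_0}\cap H_{j_1}\cap H_{j_2}=\emptyset$ (resp.~$C\cap H_{j_0}\cap H_{j_1}=\emptyset$) simultaneously with the prescribed vanishing, using a single finite list. Two ingredients resolve this. First, the base locus of $V_i$ on $Y$ equals $\Supp A_i$, so for any point outside $A_i$ a generic section of $V_i$ can be chosen to avoid it; in particular the hyperplanes drawn from $V_i$ need only contribute the essential part $\beta\alpha A_i$ near $y_0$ and are otherwise free. Second, Bertini on the two-dimensional $Y$ (resp.~one-dimensional $C$) guarantees that a generic triple (resp.~pair) of hyperplanes in $|\beta M|$ has empty common intersection with $Y$ (resp.~$C$), so the collection $\mathscr{J}$ is automatically nonempty. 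A quasi-compactness argument on $Y$ reduces the analysis to finitely many configurations of $(y_0, \{H_{j_0}, H_{j_1}, H_{j_2}\})$, yielding the desired finite list and completing the proof.
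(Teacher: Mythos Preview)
Your overall strategy---stratify points of $Y$ by the local configuration of the $Z_{ij}$ and, in each case, exhibit a triple of hyperplanes whose restriction to $Y$ dominates $\beta\alpha$ times the relevant local sum of $Z_{ij}$'s---matches the paper's. But the specific hyperplane choices you propose in cases (iii) and (iv) of part (a) do not satisfy the emptiness condition $Y\cap H_{j_0}\cap H_{j_1}\cap H_{j_2}=\emptyset$, and this cannot be repaired by genericity alone.

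In case (iii) you take $H_0\in V_i$, $H_1\in V_\ell$, $H_2$ generic. But every section in $V_i$ vanishes along $\Supp A_i$, and every section in $V_\ell$ vanishes along $\Supp A_\ell$; since $Z_{i\ell}\subseteq A_i\cap A_\ell$, the curve $Z_{i\ell}$ lies in $Y\cap H_0\cap H_1$ no matter how you choose $H_0,H_1$. A generic hyperplane $H_2$ then meets this curve in $\deg Z_{i\ell}>0$ points, so $Y\cap H_0\cap H_1\cap H_2\ne\emptyset$. Case (iv) fails for the same reason: the cusp points for indices $i,j,\ell$ lie in $\Supp A_i\cap\Supp A_j\cap\Supp A_\ell$, hence in $Y\cap H_0\cap H_1\cap H_2$ for any choice from $V_i,V_j,V_\ell$.

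The paper resolves this by working not with $V_i$ alone but with the \emph{sum} $V_i+V_j=\sigma_i^{\beta\alpha}\Gamma(Y,\beta(M-\alpha A_i))+\sigma_j^{\beta\alpha}\Gamma(Y,\beta(M-\alpha A_j))$. A generic section here still has divisor $\ge\beta\alpha Z_{ij}$ (since $Z_{ij}\subseteq A_i\cap A_j$), but the base locus drops to $\Supp A_i\cap\Supp A_j$, which is $Z_{ij}$ together with finitely many points. With $H_0$ chosen from $V_i+V_j$ and $H_1$ from $V_\ell+V_m$ (case (iii)) or from $V_\ell$ (case (iv)), the intersection $Y\cap H_0\cap H_1$ becomes zero-dimensional, and a generic $H_2$ then avoids it. This passage from $V_i$ to $V_i+V_j$ is the missing idea.

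A smaller issue in part (b): absorbing the cusp and double-point preimages on $C$ into the ``finitely many'' exceptions does not dispose of the infinitely many $y\in C(\overline k_v)$ that are merely \emph{close} to such a point, where several $\lambda_{ab}(y)$ are simultaneously large. Your hyperplanes from $V_i$ do in fact handle these cases (since $A_i\supseteq Z_{ij}+Z_{i\ell}$, one section from $V_i$ already dominates $\beta\alpha(Z_{ij}+Z_{i\ell})$ on $C$, and a permutation argument gives the factor $\tfrac12$), but you still need to run the four-case analysis rather than claim only the single-$Z_{ij}$ case arises.
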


The proof of this proposition, in turn, relies mainly on two lemmas.
These lemmas replace Faltings' computations of ideals associated to indices.

\begin{lemma}\label{lemma_2.5.3}
Let $i,j,\ell,m$ be distinct indices.  Then:
\begin{enumerate}
\item[(a).]  there exist hyperplanes $H_0$, $H_1$, and $H_2$ in $\mathbb P^N_k$,
such that
$$Y\cap H_0\cap H_1\cap H_2=\emptyset$$
and
\begin{equation}\label{eq_2.5.3.1}
  (H_0+H_1+H_2)\big|_Y - \beta\alpha(Z_{ij}+Z_{\ell m})
\end{equation}
is an effective Cartier divisor on $Y$; and
\item[(b).]  given any integral curve $C\subseteq Y$ not contained in
any of the $Z_{ab}$, there are hyperplanes $H_0$ and $H_1$ in $\mathbb P^N_k$,
such that $C\cap H_0\cap H_1=\emptyset$ and
\begin{equation}\label{eq_2.5.3.2}
  (H_0+H_1)\big|_C - Z_{ij}\big|_C
\end{equation}
is an effective Cartier divisor on $C$.
\end{enumerate}
\end{lemma}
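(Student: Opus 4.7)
The plan is to build $H_0$ and $H_1$ so that $(H_0)|_Y$ contains $\beta\alpha Z_{ij}$ and $(H_1)|_Y$ contains $\beta\alpha Z_{\ell m}$, but so that the two divisors share no common irreducible curve on $Y$. Once this is arranged, $(H_0)|_Y\cap(H_1)|_Y$ is a finite set of points, and a generic third hyperplane $H_2$ from the very ample system $|\beta M|$ can be chosen to miss those points, yielding $Y\cap H_0\cap H_1\cap H_2=\emptyset$. Part (b) will follow by an easier version of the same recipe adapted to the curve $C$.

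To build $H_0$, I work with the subspace $W\subseteq H^0(Y,\beta M-\beta\alpha Z_{ij})$ spanned by sections of the form $u\sigma_i'$ and $v\sigma_j'$, where $\sigma_i'$ and $\sigma_j'$ are the canonical sections of $\beta\alpha(A_i-Z_{ij})$ and $\beta\alpha(A_j-Z_{ij})$, and $u,v$ range over the very ample linear systems $|\beta(M-\alpha A_i)|$ and $|\beta(M-\alpha A_j)|$; this uses the two decompositions
\[
  \beta M-\beta\alpha Z_{ij}
    =\beta(M-\alpha A_i)+\beta\alpha(A_i-Z_{ij})
    =\beta(M-\alpha A_j)+\beta\alpha(A_j-Z_{ij}).
\]
A point $P\in Y$ is a base point of $W$ precisely when $\sigma_i'(P)=\sigma_j'(P)=0$, i.e., $P\in\Supp(A_i-Z_{ij})\cap\Supp(A_j-Z_{ij})$. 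An elementary index check shows that $A_i-Z_{ij}$ and $A_j-Z_{ij}$ share no irreducible component, so by Theorem \ref{falt_constr}(vi) the intersection of their supports is the (finite) set of points of $Y$ lying over double points and cusps of $D$. Consequently no $Z_{ab}$ with $\{a,b\}\ne\{i,j\}$ sits in the base locus of $W$ as a curve, and a generic $s\in W$ has zero divisor $(s)$ containing none of these $Z_{ab}$ as a component. Taking $H_0$ to be the hyperplane cut out by $s\cdot\tau_{ij}\in H^0(Y,\beta M)$, where $\tau_{ij}$ is the canonical section of $\beta\alpha Z_{ij}$, gives $(H_0)|_Y=\beta\alpha Z_{ij}+R_0$ with $R_0$ free of any $Z_{ab}$-component.

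The same recipe with $(\ell,m)$ in place of $(i,j)$ produces $H_1$ with $(H_1)|_Y=\beta\alpha Z_{\ell m}+R_1$, and imposing finitely many further proper linear conditions in the analogous space forces $R_1$ to avoid each irreducible component of $R_0$. Then $(H_0)|_Y$ and $(H_1)|_Y$ share no irreducible component, their intersection on $Y$ is a finite set $F$, and very ampleness of $|\beta M|$ lets us pick $H_2$ with $(H_2)|_Y\cap F=\emptyset$; the required effectivity $(H_0+H_1+H_2)|_Y-\beta\alpha(Z_{ij}+Z_{\ell m})=R_0+R_1+(H_2)|_Y$ is then automatic. For part (b) the same $H_0$, which is generic and therefore does not contain $C$ (since $C$ is not one of the $Z_{ab}$ and a generic $R_0$ does not contain $C$), reduces the task to picking $H_1\in|\beta M|$ with $C\cap H_1$ disjoint from the finite set $C\cap H_0$, which is again immediate from very ampleness. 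The main obstacle in the entire argument is the base-locus computation for $W$: if $A_i-Z_{ij}$ and $A_j-Z_{ij}$ shared any common irreducible curve $C'$, that curve would appear in both $(H_0)|_Y$ and $(H_1)|_Y$, forcing any triple intersection with $Y$ to meet $C'$ (every hyperplane meets every positive-degree curve in $\mathbb P^N$), and Theorem \ref{falt_constr}(vi) is precisely what rules this out.
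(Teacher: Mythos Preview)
Your argument is correct and is essentially the same as the paper's. The paper builds $H_0$ from the linear system
\[
  \sigma_i^{\beta\alpha}\cdot\Gamma(Y,\beta(M-\alpha A_i))
  + \sigma_j^{\beta\alpha}\cdot\Gamma(Y,\beta(M-\alpha A_j))
  \subseteq H^0(Y,\beta M),
\]
where $\sigma_i,\sigma_j$ are the canonical sections of $\mathscr O(A_i),\mathscr O(A_j)$; since $\sigma_i^{\beta\alpha}=\tau_{ij}\cdot\sigma_i'$ (and similarly for $j$), this is exactly your $\tau_{ij}\cdot W$. The only cosmetic difference is that the paper computes the base locus as $\Supp A_i\cap\Supp A_j=Z_{ij}\cup(\text{finite set})$, whereas you factor out $\tau_{ij}$ first and compute the base locus of $W$ as the finite set $\Supp(A_i-Z_{ij})\cap\Supp(A_j-Z_{ij})$; the subsequent genericity and common-component bookkeeping is the same in both write-ups, and your version spells it out a bit more explicitly.
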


\begin{proof}
Let $\sigma_i$ and $\sigma_j$ be the canonical sections of $\mathscr O(A_i)$
and $\mathscr O(A_j)$, respectively.  Then the linear system
$$\sigma_i^{\beta\alpha}\cdot\Gamma(Y,\beta(M-\alpha A_i))
  + \sigma_j^{\beta\alpha}\cdot\Gamma(Y,\beta(M-\alpha A_j))$$
has base locus $\Supp A_i\cap\Supp A_j$, since the first summand has
base locus $\Supp A_i$ and the second has base locus $\Supp A_j$.
This intersection consists of the union of $Z_{ij}$ and finitely many closed
points.  Choose an element of this linear system, sufficiently generic so that
it does not vanish identically on any irreducible component of $Z_{\ell m}$,
and let $H_0$ be the associated hyperplane in $\mathbb P^N_k$.  Then
$H_0\big|_Y - \beta\alpha Z_{ij}$ is an effective divisor.

Similarly let $\sigma_\ell$ and $\sigma_m$ be the canonical sections of
$\mathscr O(A_\ell)$ and $\mathscr O(A_m)$, respectively, and let $H_1$ be the
hyperplane associated to an element of
$$\sigma_\ell^{\beta\alpha}\cdot\Gamma(Y,\beta(M-\alpha A_\ell))
  + \sigma_m^{\beta\alpha}\cdot\Gamma(Y,\beta(M-\alpha A_m))\;,$$
chosen sufficiently generically such that $H_1$ does not contain any
irreducible component of $H_0\cap Y$.
Then $H_1\big|_Y - \beta\alpha Z_{\ell m}$ is effective.

By construction, $Y\cap H_0\cap H_1$ is a finite union of closed points,
so we can let $H_2$ be a hyperplane that avoids those points to ensure that
$Y\cap H_0\cap H_1\cap H_2=\emptyset$.  By construction,
$$\left(H_0\big|_Y - \beta\alpha Z_{ij}\right)
  + \left(H_1\big|_Y - \beta\alpha Z_{\ell m}\right) + H_2\big|_Y$$
is effective, and this is the divisor (\ref{eq_2.5.3.1}).  This proves (a).

For part (b), let $\sigma_i$ be as above, and let $H_0$ be the hyperplane
associated to an element of
$\sigma_i^{\beta\alpha}\cdot\Gamma(Y,\beta(M-\alpha A_i))$,
chosen generically so that $H_0$ does not contain $C$.  Let $H_1$ be a
hyperplane in $\mathbb P^N_k$, chosen so that $C\cap H_0\cap H_1=\emptyset$.
Since $H_0\big|_C - Z_{ij}\big|_C$ is an effective divisor,
so is (\ref{eq_2.5.3.2}).
\end{proof}

\begin{lemma}\label{lemma_2.5.4}
Let $i,j,\ell$ be distinct indices.  Then:
\begin{enumerate}
\item[(a).]  there exist hyperplanes $H_0$, $H_1$, and $H_2$ in $\mathbb P^N_k$,
such that
$$Y\cap H_0\cap H_1\cap H_2=\emptyset$$
and
$$(H_0+H_1+H_2)\big|_Y
  - \beta\alpha(Z_{ij} + Z_{i\ell} + Z_{j\ell})$$
is an effective Cartier divisor on $Y$; and
\item[(b).]  given any integral curve $C\subseteq Y$ not contained in any of
the $Z_{ab}$, there are hyperplanes $H_0$ and $H_1$ in $\mathbb P^N_k$,
such that $C\cap H_0\cap H_1=\emptyset$ and
\begin{equation}\label{eq_2.5.4.1}
  (H_0+H_1)\big|_C - \beta\alpha(Z_{ij} + Z_{i\ell})\big|_C
\end{equation}
is an effective Cartier divisor on $C$.
\end{enumerate}
\end{lemma}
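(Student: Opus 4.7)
The plan is to mirror the proof of Lemma \ref{lemma_2.5.3}, adapting the construction to the three-index situation. The key observation is that, since $A_i=\sum_{k\ne i}Z_{ik}$ contains both $Z_{ij}$ and $Z_{i\ell}$ as summands, a single section divisible by $\sigma_i^{\beta\alpha}$ vanishes to order $\beta\alpha$ along both of those divisors. Thus one hyperplane can cover two of the three terms in $Z_{ij}+Z_{i\ell}+Z_{j\ell}$, while a second hyperplane, built exactly as in the proof of Lemma \ref{lemma_2.5.3}(a), can cover the remaining term $Z_{j\ell}$.

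For part (a), I would first take $H_0$ to be a sufficiently generic element of the linear subsystem $\sigma_i^{\beta\alpha}\cdot\Gamma(Y,\beta(M-\alpha A_i))$ of $\Gamma(Y,\beta M)$, so that $H_0\big|_Y=\beta\alpha A_i+D_0$ with $D_0$ an effective divisor whose support can be chosen to avoid any prescribed finite list of proper subvarieties of $Y$; this uses the very ampleness of $\beta(M-\alpha A_i)$. Next I would take $H_1$ to be a sufficiently generic element of
$$\sigma_j^{\beta\alpha}\cdot\Gamma(Y,\beta(M-\alpha A_j))
  + \sigma_\ell^{\beta\alpha}\cdot\Gamma(Y,\beta(M-\alpha A_\ell))\;,$$
whose base locus, exactly as in the proof of Lemma \ref{lemma_2.5.3}, is $\Supp A_j\cap\Supp A_\ell=\Supp Z_{j\ell}$ together with finitely many cusp and double points; this forces $H_1\big|_Y\ge\beta\alpha Z_{j\ell}$, so that $H_1\big|_Y=\beta\alpha Z_{j\ell}+E_1$ for some effective residual divisor $E_1$. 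Summing, $(H_0+H_1)\big|_Y\ge\beta\alpha A_i+\beta\alpha Z_{j\ell}\ge\beta\alpha(Z_{ij}+Z_{i\ell}+Z_{j\ell})$, so the desired effectivity is forced regardless of the choice of $H_2$. Since $Z_{j\ell}$ is not a component of $A_i$ (because $i\notin\{j,\ell\}$), the genericity of $D_0$ and $E_1$ ensures that $H_0\big|_Y$ and $H_1\big|_Y$ share no irreducible curve component on $Y$; hence $H_0\cap H_1\cap Y$ is $0$-dimensional, i.e., a finite set of closed points. I can then choose $H_2$ to be a generic hyperplane in $\PP^N_k$ avoiding every point of this finite set.

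Part (b) is a simpler variant in which only $Z_{ij}+Z_{i\ell}$ must be covered, both summands already lying in $A_i$. I would take $H_0$ to be a sufficiently generic element of $\sigma_i^{\beta\alpha}\cdot\Gamma(Y,\beta(M-\alpha A_i))$. Since $C$ is not contained in any $Z_{ab}$, in particular $C\not\subseteq A_i$; moreover, by genericity, the moving part of $H_0\big|_Y$ also does not contain $C$. Hence $H_0\big|_C$ is a well-defined effective divisor on $C$ with $H_0\big|_C\ge\beta\alpha(Z_{ij}+Z_{i\ell})\big|_C$, and $C\cap H_0$ is a finite subset of $C$. I then take $H_1$ to be any hyperplane in $\PP^N_k$ avoiding every point of $C\cap H_0$, at which point the effectivity of $(H_0+H_1)\big|_C-\beta\alpha(Z_{ij}+Z_{i\ell})\big|_C$ is automatic.

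The main subtlety I anticipate is the verification, in part (a), that $H_0\cap H_1\cap Y$ is genuinely finite. The cusp point $p=Z_{ij}\cap Z_{i\ell}\cap Z_{j\ell}$ lies on both $H_0$ (through its inclusion in $A_i$) and on $H_1$ (through $Z_{j\ell}$), so it does appear in the intersection; this poses no real problem, since $p$ is just one of finitely many points to be avoided by $H_2$. What requires care is the genericity argument itself: one must track the fixed and moving components of $H_0\big|_Y$ and $H_1\big|_Y$ and confirm that, for sufficiently generic $D_0$ and $E_1$, no irreducible curve on $Y$ lies in their common support.
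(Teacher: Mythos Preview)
Your argument is correct and is essentially the paper's own proof with the roles of $H_0$ and $H_1$ interchanged: the paper takes $H_0$ from the sum system $\sigma_i^{\beta\alpha}\cdot\Gamma(Y,\beta(M-\alpha A_i))+\sigma_j^{\beta\alpha}\cdot\Gamma(Y,\beta(M-\alpha A_j))$ to pick up $Z_{ij}$ and then $H_1$ from $\sigma_\ell^{\beta\alpha}\cdot\Gamma(Y,\beta(M-\alpha A_\ell))$ to pick up $Z_{i\ell}+Z_{j\ell}$, whereas you let the single-$\sigma_i$ system do the double duty and the sum system handle the remaining $Z_{j\ell}$. Part (b) matches the paper verbatim.
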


\begin{proof}
Let $\sigma_i$ and $\sigma_j$ be as in the preceding proof.
Choose a section of the linear system
$$\sigma_i^{\beta\alpha}\cdot\Gamma(Y,\beta(M-\alpha A_i))
  + \sigma_j^{\beta\alpha}\cdot\Gamma(Y,\beta(M-\alpha A_j))\;,$$
and let $H_0$ be the associated hyperplane.
Then $H_0\big|_Y-\beta\alpha Z_{ij}$ is effective.
We may assume that the choice of $H_0$ is sufficiently generic
so that $H_0$ does not contain any irreducible component of $A_\ell$.

Next let $\sigma_\ell$ be the canonical section of $\mathscr O(A_\ell)$,
and let $H_1$ be the hyperplane associated to a section of
$$\sigma_\ell^{\beta\alpha}\cdot\Gamma(Y,\beta(M-\alpha A_\ell))\;.$$
Then $H_1\big|_Y - \beta\alpha(Z_{i\ell}+Z_{j\ell})$ is effective.
We may assume that $H_1$ does not contain any irreducible component
of $Y\cap H_0$.

Again, $Y\cap H_0\cap H_1$ consists of finitely many points, and we choose
$H_2$ to be any hyperplane not meeting any of these points.  Part (a)
then concludes as in the previous lemma.

For part (b), let $H_0$ and $H_1$ be the hyperplanes associated to suitably
chosen sections of
$\sigma_i^{\beta\alpha}\cdot\Gamma(Y,\beta(M-\alpha A_i))$ and
$\Gamma(Y,\beta M)$, respectively.  As in the previous lemma, we then have
$C\cap H_0\cap H_1=\emptyset$.  Since
$H_0\big|_C-\beta\alpha(Z_{ij}+Z_{i\ell})\big|_C$ is effective,
so is (\ref{eq_2.5.4.1}).
\end{proof}

\begin{proof}[Proof of Proposition \ref{falt_main_prop}]
First consider part (a) of the proposition.

Fix a place $v\in M_k$.
Apply Lemmas \ref{lemma_2.5.3}a and \ref{lemma_2.5.4}a to all possible
collections $i,j,\ell,m$ and $i,j,\ell$ of indices, respectively.
This involves only finitely many applications, so only finitely many
hyperplanes occur.  Let $H_1,\dots,H_q$ be those hyperplanes.

The conditions in Theorem \ref{falt_constr} on the intersections of
the divisors $Z_{ij}$
imply that there is a constant $C_v$ such that, for each $y\in Y(k)$
not in $\bigcup\Supp Z_{ij}$, one of the following conditions holds.
\begin{enumerate}
\item  $\lambda_{ij}(y)\le C_v$ for all $i$ and $j$;
\item  there are indices $i$ and $j$ such that $\lambda_{ij}(y)>C_v$
but $\lambda_{ab}(y)\le C_v$ in all other cases;
\item  there are distinct indices $i,j,\ell,m$ such that
$\lambda_{ij}(y)>C_v$ and $\lambda_{\ell m}(y)>C_v$
but $\lambda_{ab}(y)\le C_v$ in all other cases; or
\item  there are indices $i,j,\ell$ such that
$\max\{\lambda_{ij}(y), \lambda_{i\ell}(y), \lambda_{j\ell}(y)\}>C_v$,
but $\lambda_{ab}(y)\le C_v$ if $\{a,b\}\nsubseteq\{i,j,\ell\}$.
\end{enumerate}

For case (iii), (\ref{eq_2.5.1}) follows from Lemma \ref{lemma_2.5.3}a,
since one can take
$J$ corresponding to the hyperplanes occurring in the lemma,
and the inequality will then follow from effectivity of (\ref{eq_2.5.3.1}).
Case (ii) follows as a special case of this lemma, since $n\ge 4$.
Case (iv) follows from Lemma \ref{lemma_2.5.4}a, by a similar argument.
Finally, in case (i) there is nothing to prove.  This proves (a).

For part (b), let $H_1,\dots,H_q$ be a finite collection of hyperplanes
occurring in all possible applications of Lemmas \ref{lemma_2.5.3}b
and \ref{lemma_2.5.4}b
with the given curve $C$.  We have cases (i)--(iv) as before.
Cases (ii) and (iii) follow from Lemma \ref{lemma_2.5.3}b, where we may assume
without loss of generality that $\lambda_{ij}(y)\ge\lambda_{\ell m}(y)$
to obtain (\ref{eq_2.5.2}) from effectivity of (\ref{eq_2.5.3.2}).
Similarly, case (iv)
follows from Lemma \ref{lemma_2.5.4}b after a suitable
permutation of the indices, and case (i) is again trivial.
\end{proof}

\begin{remark}
It will not actually be needed in the sequel, but the collections $(c_v)$ of
constants in each part of Proposition \ref{falt_main_prop} are actually
$M_k$-constants.  This follows directly from \cite[Prop.~2.4]{RV}.
\end{remark}

\begin{remark}
It is possible (and, in fact, slightly easier) to write
Proposition \ref{falt_main_prop} in terms of Cartier b-divisors
instead of Weil functions (for notations, see \cite{RV}).  For example, one can replace (\ref{eq_2.5.1}) with
\begin{equation}\label{eq_2_5_1bis}
  \bigvee_{J\in\mathscr J} \sum_{j\in J} H_j \ge \beta\alpha\sum_{i<j} Z_{ij}
\end{equation}
relative to the cone of effective Cartier b-divisors.

In the proof, one would let $\phi\colon Y\to X$ be a model for which the
left-hand side of (\ref{eq_2_5_1bis}) pulls back to an ordinary divisor,
and for each $P\in X$ one would consider four cases:
\begin{enumerate}
\item $P\notin Z_{ij}$ for all $i$, $j$;
\item $P\in Z_{ij}$ for exactly one pair $i,j$;
\item there are distinct indices $i,j,\ell,m$ such that $P\in Z_{ij}$
  and $P\in Z_{\ell m}$, but $P\notin Z_{ab}$ for all other components; and
\item there are indices $i,j,\ell$ such that $P$ lies on at least two of
  $Z_{ij}$, $Z_{i\ell}$, and $Z_{j\ell}$, but $P\notin Z_{ab}$ if
  $\{a,b\}\nsubseteq\{i,j,\ell\}$.
\end{enumerate}
In each case let $U$ be the complement of all $Z_{ab}$ that do not contain $P$.
Then $U$ is an open neighborhood of $P$ in $X$, and there is a set $J$
of indices such that $\sum_{j\in J}H_j-\beta\alpha\sum_{a<b}Z_{ab}$ is
effective on $U$.  This set $J$ is obtained from Lemmas \ref{lemma_2.5.3}
or \ref{lemma_2.5.4}, as appropriate.

We chose to keep the phrasing in terms of Weil functions, however, since
that is the phrasing that will be most convenient for the next step in the
proof of Theorem \ref{falt_dioph}.
\end{remark}
The two parts of Proposition \ref{falt_main_prop} say that
$\beta\sum Z_{ij}=(\beta/2)M$ has $\alpha$-EF-growth and $(\alpha/2)$-EF-growth,
respectively, with respect to $\mathscr O(\beta M)$, and therefore
\[
  \NevEF\left(\beta M,\frac\beta2M\right) \le \frac3\alpha
    \qquad\text{and}\qquad
    \NevEF\left(\beta M,\frac\beta2M\right) \le \frac4\alpha\;,
\]
respectively, on $Y$ and $C$, respectively.  By Remark \ref{remk_EF_D_linear},
these become
\begin{equation}\label{eq_falt_nevef}
  \NevEF(\beta M) \le \frac6\alpha \qquad\text{and}\qquad
    \NevEF(\beta M) \le \frac8\alpha\;,
\end{equation}
respectively.

\bigskip
\begin{proof}[Proof of Theorem \ref{falt_dioph}]
(a).  By (\ref{eq_falt_nevef}) and the assumption $\alpha>6$, we have
$$\NevEF(\beta M) \le \frac6\alpha < 1\;.$$
The result then follows by Corollary \ref{cor_nev_ef_int_pts}.

(b).  Let $Z$ be the Zariski closure of a set of $D$-integral points on $Y$.
By part (a), $Z\ne Y$, so it will suffice to show that no irreducible component
of $Z$ can be a curve.  This holds because, on any curve $C$ in $Y$
not contained in $\Supp M$,
$$\NevEF(\beta M) \le \frac8\alpha < 1\;,$$
and we conclude as before.
\end{proof}

\end{document}